\newtheorem{thm}{Theorem}[section]
\newtheorem{lemma}[thm]{Lemma}
\newtheorem{prop}[thm]{Proposition}
\newtheorem{cor}[thm]{Corollary}
\newcommand{\va}{{\bf a}}
\newcommand{\vb}{{\bf b}}
\newcommand{\vc}{{\bf c}}
\newcommand{\Z}{\mathbb Z}
\newcommand{\Q}{\mathbb Q}
\newcommand{\hZ}{\widehat{\Z}}
\newcommand{\Aut}{{\rm Aut}}
\newcommand{\End}{{\rm End}}
\newcommand{\diag}{{\rm diag}}
\newcommand{\bs}{\backslash}
\newcommand{\Imat}{\diag[1,1]}
\newcommand{\Hei}{\mathcal{H}}
\newcommand{\hl}{\rule{0ex}{3ex}\rule{0ex}{3ex}\widehat{L}}
\newcommand{\halpha}{\rule{0ex}{3ex}\widehat{\alpha}}
\newcommand{\hbeta}{\rule{0ex}{3ex}\widehat{\beta}}
\newcommand{\rr}{R_r}
\newcommand{\dr}{\Delta_r}
\newcommand{\gamr}{\Gamma_r}
\newcommand{\trpi}{T_{r,p}^{(i)}}
\newcommand{\rrp}{R_{r,p}}
\newcommand{\drp}{\Delta_{r,p}}
\newcommand{\gamrp}{\Gamma_{r,p}}
\newcommand{\gl}{G_L}
\newcommand{\dl}{\Delta_L}
\newcommand{\gaml}{\Gamma_L}
\newcommand{\glp}{G_{L_p}}
\newcommand{\dlp}{\Delta_{L_p}}
\newcommand{\gamlp}{\Gamma_{L_p}}
\newcommand{\hgl}{\rule{0ex}{3ex}\widehat{G}_L}
\newcommand{\hdl}{\rule{0ex}{3ex}\widehat{\Delta}_L}
\newcommand{\hgaml}{\rule{0ex}{3ex}\widehat{\Gamma}_L}
\newcommand{\etal}{\eta_L}
\newcommand{\etadl}{\eta_{*L}}
\newcommand{\etaul}{\eta_L^*}
\newcommand{\rl}{R_L}
\newcommand{\rlp}{R_{L_p}}
\newcommand{\hrl}{\rule{0ex}{3ex}\widehat{R}_L}
\newcommand{\gH}{G_\mathcal{H}}
\newcommand{\dH}{\Delta_\mathcal{H}}
\newcommand{\gamH}{\Gamma_\mathcal{H}}
\newcommand{\dHp}{\Delta_{\mathcal{H}_p}}
\newcommand{\gamHp}{\Gamma_{\mathcal{H}_p}}
\newcommand{\hdH}{\rule{0ex}{3ex}\widehat{\Delta}_\mathcal{H}}
\newcommand{\hgamH}{\rule{0ex}{3ex}\widehat{\Gamma}_\mathcal{H}}
\newcommand{\etaH}{\eta_\mathcal{H}}
\newcommand{\etadH}{\eta_{*\mathcal{H}}}
\newcommand{\etauH}{\eta_\mathcal{H}^*}
\newcommand{\rH}{R_\mathcal{H}}
\newcommand{\rHp}{R_{\mathcal{H}_p}}
\newcommand{\hrH}{\rule{0ex}{3ex}\widehat{R}_\mathcal{H}}
\newcommand{\dZr}{\Delta_{\Z^r}}
\newcommand{\gamZr}{\Gamma_{\Z^r}}
\newcommand{\hdZr}{\rule{0ex}{3ex}\widehat{\Delta}_{\Z^r}}
\newcommand{\hgamZr}{\rule{0ex}{3ex}\widehat{\Gamma}_{\Z^r}}
\newcommand{\etadZr}{\eta_{*{\Z^r}}}
\newcommand{\gzpk}{\Gamma_0(p^k)}
\newcommand{\gplpk}{\Gamma(p^l,p^{l+k})}
\newcommand{\glk}{G_{l,k}}
\newcommand{\glko}{G_{l,k}^{1}}
\newcommand{\glkpm}{G_{l,k}^{\pm}}
\newcommand{\eqna}[1]{
	\begin{eqnarray*}
		#1
	\end{eqnarray*}
}
\newcommand{\pmat}[4]{
\begin{pmatrix}
#1 & #2 \\ #3 & #4
\end{pmatrix}
}
\begin{document}
\title{A note on a Hecke ring associated with the Heisenberg Lie algebra}


\author{Fumitake Hyodo}



\date{}

\maketitle

\begin{abstract}

This paper focuses on the theory of the Hecke rings associated with the general linear groups originally studied by Hecke and Shimura et al., and moreover generalizes its notions to Hecke rings associated with the automorphism groups of certain algebras. Then, in the case of the Heisenberg Lie algebra, we show an analog of the classical theory.
\end{abstract}

\section{Introduction}
\quad Hecke rings were introduced by Shimura \cite{S1}, which generalize the ring of Hecke operators studied by Hecke \cite{H}.
Later, in Chapter 3 of his book \cite{S2}, Shimura improved the definition of Hecke rings to a purely algebraic one, and also introduced a part of the theory of Tamagawa \cite{T} on the Hecke rings associated with the general linear groups over $\Q$. The Hecke rings treated there can be divided into local and global ones. 
%
Let $r$ be a positive integer, and let $p$ be a prime number.
The former is the Hecke ring $\rrp$ with respect to the pair $\left(GL_r(\Z_p), GL_r(\Q_p) \cap M_r(\Z_p)\right)$. It is the polynomial ring over $\Z$ on $r$ variables (cf. Theorem \ref{thm:local:Hecke-Tamagawa}).
The latter is the Hecke ring $\rr$ with respect to the pair $(GL_r(\Z), GL_r(\Q) \cap M_r(\Z))$, which is related to the local Hecke rings $\rrp$ in the following way (cf. Theorem \ref{thm:Hecke-Tamagawa}).
\begin{enumerate}
\item For each $p$, the local Hecke ring $\rrp$ embeds naturally into the global one $\rr$.
\item The local Hecke rings $\rrp$ commute with each other in $\rr$.
\item $\rr$ is generated by the local Hecke rings $\rrp$ as a ring.
\end{enumerate} 

We generalize the Hecke rings $\rr$ and $\rrp$. Namely, for a prime number $p$ and a certain algebra $L$, we deal with the local Hecke ring $\rlp$ and the global Hecke ring $\rl$ respectively with respect to the pair 
\[\left( \Aut_{\Z_p}^{alg}(L\otimes \Z_p), \ \End_{\Z_p}^{alg}(L\otimes \Z_p) \cap \Aut_{\Q_p}^{alg}(L\otimes \Q_p)\right),\]
 and the pair 
 \[\left( \Aut_{\Z}^{alg}(L), \ \End_{\Z}^{alg}(L) \cap \Aut_{\Q}^{alg}(L\otimes \Q)\right).\] 

Our previous paper \cite{Hy} studied the local case of the Heisenberg Lie algebra $\Hei$, that is, the quotient of the free Lie algebra $L_2$ on two generators over $\Z$ by the ideal $ [L_2 ,[L_2 ,L_2 ]]$.
As a result, it was proved that the local Hecke ring $\rHp$ is noncommutative, unlike the Hecke rings $\rrp$.

In this paper, we discuss the global Hecke ring $\rH$. First, in Section \ref{section:profin}, we introduce a Hecke ring $\hrl$ associated with the profinite completion $\hl$ of $L$, which satisfies the properties similar to $\rr$ (cf. Proposition \ref{prop:structureofhrl}).
Then, we construct an additive map $\etaul:\hrl \to \rl$. Furthermore, the multiplicativity and the injectivity are proved if $L$ is the Heisenberg Lie algebra $\Hei$.
Finally, in Section \ref{section:nonsurjectivity}, we prove the nonsurjectivity of $\etauH$, and conclude with the following result, which is analogous to the properties of the Hecke ring $\rr$ except for assertion \ref{assertion:nonsurjectivity} (cf. Theorem \ref{thm:main}):

\begin{thm}[Main Theorem]
The following assertions hold.
\begin{enumerate}
\item For each $p$, the local Hecke ring $\rHp$ is embedded into $\rH$ by $\etauH$.\label{assertion:embedding}
\item The local Hecke rings $\rHp$ commute with each other in $\rH$. \label{assertion:commutativity}
\item $\rH$ is \textbf{\textit{not}} generated by the local Hecke rings $\rHp$ as a ring. \label{assertion:nonsurjectivity}
\end{enumerate}
\end{thm}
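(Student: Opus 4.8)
```latex
The plan is to establish the three assertions separately, building on the structural results already obtained for the global Hecke ring $\rH$ and the profinite object $\hrH$. For assertion \ref{assertion:embedding}, I would first recall that Section \ref{section:profin} provides the additive map $\etauH \colon \hrH \to \rH$ together with the statement that it is multiplicative and injective in the Heisenberg case. Since Proposition \ref{prop:structureofhrl} gives an embedding of each local Hecke ring $\rHp$ into $\hrH$ (playing the role that property (1) of Theorem \ref{thm:Hecke-Tamagawa} plays for $\rr$), composing this local-to-profinite embedding with the injective ring homomorphism $\etauH$ yields the desired embedding $\rHp \hookrightarrow \rH$. The key point is that $\etauH$ respects the ring structure, so the image of $\rHp$ is genuinely a subring; I expect this to follow directly from the multiplicativity of $\etauH$ established earlier, with no new obstacle.

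For assertion \ref{assertion:commutativity}, the strategy is again to push the commutativity of the local rings inside $\hrH$ through the map $\etauH$. Inside $\hrH$, the various $\rHp$ should commute with one another because they are supported at distinct primes: an element of $\rHp$ and an element of $\rHq$ for $p \neq q$ act on disjoint local factors of the profinite completion $\hl$, so their product in either order is the same. This is the direct analog of property (2) in the classical case, and the Euler-product / tensor-decomposition structure of $\hrH$ over the primes $p$ should make it transparent. Applying the multiplicative, injective $\etauH$ then transports this commutativity faithfully into $\rH$. The mild subtlety here is that $\rH$ itself is noncommutative (since each $\rHp$ already is, by the result of \cite{Hy}), so one must be careful to claim only that \emph{distinct} local rings commute, not that $\rH$ is commutative.

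The heart of the theorem, and the main obstacle, is assertion \ref{assertion:nonsurjectivity}: that $\rH$ is \emph{not} generated by the $\rHp$. Equivalently, since each $\rHp$ sits in $\rH$ via $\etauH$, the subring generated by all the local images is precisely the image of $\etauH$ (using that $\hrH$ is generated by its local factors, the analog of property (3) for the profinite ring). Thus the assertion is equivalent to the \emph{nonsurjectivity} of $\etauH \colon \hrH \to \rH$, which is exactly what Section \ref{section:nonsurjectivity} is devoted to. To prove nonsurjectivity I would look for an explicit element of $\rH$ not lying in the image of $\etauH$. The natural obstruction arises from the failure of the local-global (strong approximation) principle for the automorphism group of the Heisenberg algebra: because $\rHp$ is noncommutative, the global double cosets $\Aut_\Z^{alg}(\Hei) \bs \dH / \Aut_\Z^{alg}(\Hei)$ do not decompose simply as products of local double cosets, and there exist global Hecke operators whose "local data" cannot be assembled from any element of the restricted tensor product $\hrH$. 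Concretely, I would exhibit a double coset at a single prime $p$ whose global normalization (by the finite-index behavior of $\Aut_\Z^{alg}(\Hei)$ inside $\Aut_{\Z_p}^{alg}(\Hei \otimes \Z_p)$) produces a correction term absent from $\hrH$, and verify by a counting or index computation that this element cannot be expressed through $\etauH$. The delicate part is pinning down the precise arithmetic invariant (most plausibly a discrepancy between global and local stabilizer indices, reflecting the noncommutativity) that distinguishes $\rH$ from the image of $\etauH$, and confirming it is nonzero.
```
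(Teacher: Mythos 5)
Your handling of assertions \ref{assertion:embedding} and \ref{assertion:commutativity} is exactly the paper's argument: Proposition \ref{prop:structureofhrl} supplies the embedding of each $\rHp$ into $\hrH$ and their mutual commutativity there, and the injective ring homomorphism $\etauH$ of Proposition \ref{prop:multiplicativityetauH} transports both statements into $\rH$ (this is Theorem \ref{thm:assertion:onetwo}). Your reduction of assertion \ref{assertion:nonsurjectivity} to the nonsurjectivity of $\etauH$ --- using that the subring of $\rH$ generated by the images of the $\rHp$ is precisely the image of $\etauH$, since $\hrH$ is generated by its local subrings --- is also the paper's reduction.

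The gap is the nonsurjectivity itself, which is the heart of the theorem and occupies all of Section \ref{section:nonsurjectivity}; your proposal replaces it with a plan whose guiding heuristic misidentifies the mechanism. Nonsurjectivity holds if and only if some fiber $\gamH \bs \etaH^{-1}(\hgamH \halpha \hgamH)/\gamH$ contains more than one global double coset, because these fibers partition the $\Z$-basis of $\rH$ and the image of $\etauH$ is spanned by the fiber sums. The paper exhibits such a fiber by a concrete computation: it reduces to $\halpha = (A,\va)$ with $A = \diag[p^l,p^{l+k}]$ and $\va$ the column vector with entries $p^j,\ p^{i+j}$; identifies the fiber cardinality with the number of orbits of $\gzpk \cap \Gamma_{\Z^2}$ on the $\gzpk$-orbit of $\va$ in $\Z^2/A\Z^2$ (Corollary \ref{cor:cardinalityorbit}); proves that the image of $\gzpk \cap \Gamma_{\Z^2}$ in the finite quotient $\glk$ is exactly the subgroup $\glkpm$ of determinant $\pm 1$ (Proposition \ref{prop:surjectivity}, via strong approximation for $SL_2$); computes the determinant image of the stabilizer of $\va$ as $U_n/U_l$ with $n = \min(i,k-i,l-j)$, where $U_0 = \Z_p^*$ and $U_m = 1+p^m\Z_p$ (Proposition \ref{prop:detsa}); and concludes that the fiber has cardinality $[U_0 : \pm U_n]$, which equals $p(p-1)/2 > 1$ for $p$ odd, $l=2$, $k=4$, $j=0$, $i=2$ (Corollary \ref{cor:explicitcaluculation}). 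Note that the obstruction is \emph{not} the noncommutativity of $\rHp$, as you suggest: the determinant discrepancy (global units have determinant $\pm 1$, local units do not) is equally present in the classical case $L = \Z^r$, where the Hecke--Tamagawa theorem nonetheless gives generation by local rings. What makes the Heisenberg case fail is the interaction of that discrepancy with the extra vector part of $\dH = \Delta_{\Z^2} \times \Z^2$, i.e., the finer orbit structure of $\Z^2/A\Z^2$ under congruence subgroups. Without this counting argument, assertion \ref{assertion:nonsurjectivity} remains unproved.
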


It is the most essential to show assertion \ref{assertion:nonsurjectivity}, which is equivalent to the nonsurjectivity of $\etauH$. It is settled by calculating the cardinalities of the $\gamH$-double cosets of certain subsets of $\dH$ (cf. Corollary \ref{cor:explicitcaluculation}). 

It is worth pointing out that we treat a new type of noncommutative Hecke rings. While the Hecke rings associated with classical groups due to Andrianov \cite{A1}, Hina--Sugano \cite{Hi}, Satake \cite{Sa}, and Shimura \cite{S3} are all commutative, the local Hecke rings $\rHp$ are all noncommutative as mentioned before, and so is the global Hecke ring $\rH$ containing them. Although Dulinsky \cite{D} and Iwahori--Matsumoto \cite{IM} studied certain noncommutative Hecke rings, our Hecke rings $\rl$ and $\rlp$ are different from them in general. 

Further study of the Hecke rings $\rl$ and $\rlp$ is now in progress by the author. 
In \cite{Hy2}, we studied the local Hecke rings associated with the higher Heisenberg Lie algebras. In \cite{Hy3}, we investigated formal Dirichlet series with coefficients in $\rl$ that replaces Shimura's series with coefficients in $\rr$ \cite{S2}.

\section{Preliminary}\label{section:preliminary}\quad 
In this section, we recall the Hecke rings in \cite[Chapter 3]{S2}, and define our Hecke rings.
Let us fix a positive integer $r$ and a prime number $p$. We put $\gamrp = GL_r(\Z_p)$ and $\drp=GL_r(\Q_p) \cap M_r(\Z_p)$. Denote by $\rrp$ the Hecke ring  with respect to the pair $(\gamrp, \drp)$. Hecke and Tamagawa proved the following theorem.

\begin{thm}[{\cite{H}} and {\cite{T}}]\label{thm:local:Hecke-Tamagawa}
Let us put \[\trpi = \gamrp \diag[1,...,1,\overbrace{p,...,p}^i] \gamrp \in \rrp, \] for each $i$ with $1 \leq i \leq r$. Then $\rrp$ is the polynomial ring over $\Z$ on variables $\trpi$ with $1 \leq i \leq r$.
\end{thm}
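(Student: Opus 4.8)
The plan is to combine the elementary divisor theorem over $\Z_p$ with a leading-term argument patterned on the fundamental theorem of symmetric functions. First I would invoke the theory of elementary divisors over the principal ideal ring $\Z_p$: every double coset $\gamrp \xi \gamrp$ with $\xi \in \drp$ contains a unique diagonal representative $\diag[p^{e_1}, \ldots, p^{e_r}]$ with $0 \le e_1 \le \cdots \le e_r$. Consequently $\rrp$ is a free $\Z$-module whose basis is indexed by the partitions $e = (e_1 \le \cdots \le e_r)$ of length at most $r$; I write $T_e$ for the corresponding basis element, so that $\trpi = T_{e^{(i)}}$ with $e^{(i)} = (0^{r-i}, 1^i)$.

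Next I would equip $\rrp$ with a grading by total degree. Setting $\deg(\xi) = v_p(\det \xi) = \sum_j e_j$, the multiplicativity of $\det$ shows that in the double-coset expansion of a product $T_e \cdot T_{e'}$, every basis element $T_{e''}$ that occurs satisfies $\sum_j e''_j = \sum_j e_j + \sum_j e'_j$. Thus $\rrp = \bigoplus_{d \ge 0} \rrp^{(d)}$ is a graded ring, and the piece $\rrp^{(d)}$ is free of rank equal to the number of partitions of $d$ into at most $r$ parts. I would also record that $\rrp$ is commutative: the anti-involution induced by matrix transposition fixes each double coset $\gamrp \xi \gamrp$ (its diagonal representative is symmetric), which forces the structure constants to be symmetric and hence the monomials in the $\trpi$ to be unambiguous.

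The crux is a leading-term analysis. I would introduce the \emph{dominance} partial order on partitions of a fixed degree $d$ and prove that, for any exponent vector $(a_1, \ldots, a_r)$, the product $\prod_{i=1}^r (\trpi)^{a_i}$ expands as $T_{\lambda} + (\text{lower terms})$, where $\lambda$ is the dominance-maximal elementary divisor type $\sum_{i=1}^r a_i e^{(i)}$ (equivalently $\lambda_j = \sum_{i \ge r-j+1} a_i$), and the coefficient of $T_\lambda$ equals $1$. This is precisely the Hecke-ring incarnation of the fact that the elementary symmetric functions $e_1, \ldots, e_r$ generate the ring of symmetric functions with unitriangular transition to the monomial basis in dominance order. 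Granting the claim, the monomials $\prod (\trpi)^{a_i}$ are linearly independent over $\Z$ by unitriangularity, and within each graded piece they span, since both the monomials and the basis elements $T_e$ of degree $d$ are indexed by the partitions of $d$ with at most $r$ parts and the leading-term assignment $(a_i) \mapsto \lambda$ is a bijection. Hence the monomials form a $\Z$-basis, and $\rrp = \Z[\trp^{(1)}, \ldots, \trp^{(r)}]$ with the $\trpi$ algebraically independent.

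The main obstacle is establishing the leading-term claim, which requires computing enough of the Hecke multiplication $\trpi \cdot T_e$ to pin down its dominance-maximal elementary divisor type and to check that this top term occurs with coefficient exactly $1$. I would carry this out by the standard right-coset bookkeeping: decompose each double coset into right $\gamrp$-cosets, compute the elementary divisors of products of representatives, and show that the maximal type $e + e^{(i)}$ (sorted) is realized by precisely one coset while all other products yield strictly smaller types. This dominance bound on the occurring $e''$, together with the uniqueness of the top contribution, is the delicate combinatorial point; the grading, commutativity, and dimension count are then formal.
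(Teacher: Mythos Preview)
The paper does not give its own proof of this theorem: it is stated in the preliminary section with attributions to Hecke and Tamagawa and is quoted as an established result, so there is no argument in the paper to compare against. Your outline is a correct sketch of the standard classical proof (essentially the one in Tamagawa's paper and in Shimura's book): Smith normal form over $\Z_p$ gives the double-coset basis, the $p$-adic valuation of the determinant gives the grading, the transpose anti-involution yields commutativity, and the unitriangular leading-term argument in dominance order---the Hecke-ring analogue of the elementary-symmetric-function generation of symmetric polynomials---then shows that the $\trpi$ are algebraically independent generators. The only point you flag as delicate, namely that in $T_{e^{(i)}}\cdot T_e$ the type $e+e^{(i)}$ (sorted) is dominance-maximal and occurs with multiplicity one, is indeed the substantive step; it is handled in the cited references by exactly the right-coset bookkeeping you describe, so your plan would go through without difficulty.
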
 
\medskip

Let us put $\gamr=GL_r(\Z)$ and $\dr = M_r(\Z)\cap GL_r(\Q)$, and denote by $\rr$ the Hecke ring with respect to $(\gamr, \dr)$.
By \cite[Proposition 3.16]{S2} and the elementary divisor theorem, we have the following theorem.
%
\begin{thm}\label{thm:Hecke-Tamagawa}
	The following assertions hold.
	\begin{enumerate}
		\item For each $p$, the local Hecke ring $\rrp$ is embedded naturally into the global one $\rr$.
		\item The local Hecke rings $\rrp$ commute with each other in $\rr$.
		\item $\rr$ is generated by the local Hecke rings $\rrp$ as a ring.
	\end{enumerate}
\end{thm}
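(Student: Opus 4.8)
The plan is to reduce the global structure theorem for $\rr$ to the three claims by using the standard dictionary between Hecke rings and double cosets, together with the classical elementary divisor theorem. Recall that the Hecke ring $\rr$ has a $\Z$-basis given by the double cosets $\gamr \alpha \gamr$ with $\alpha \in \dr$, and that multiplication is defined by counting decompositions into right cosets. By the elementary divisor theorem, every $\alpha \in \dr = M_r(\Z) \cap GL_r(\Q)$ can be written as $\alpha = \gamma_1 \diag[d_1,\dots,d_r] \gamma_2$ with $\gamma_1,\gamma_2 \in \gamr$ and positive integers $d_1 \mid d_2 \mid \cdots \mid d_r$; hence the double cosets are parametrized by such elementary divisor tuples, i.e.\ by the divisibility-chain of the single invariant $n = d_1 \cdots d_r = |\det \alpha|$ refined by its local data at each $p$.

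\textbf{Assertion (1) and (2).} First I would invoke \cite[Proposition 3.16]{S2}, which furnishes, for each $p$, a natural injective ring homomorphism $\rrp \hookrightarrow \rr$ sending the local double coset $\gamrp \beta \gamrp$ to the global double coset $\gamr \beta' \gamr$, where $\beta'$ is the integral matrix having $p$-part equal to $\beta$ and trivial part away from $p$. This gives (1). For (2), the key point is that elements of $\rrp$ and $\rrp[q]$ for distinct primes $p \neq q$ are supported on double cosets whose elementary divisors are powers of $p$ and of $q$ respectively; since $\gcd(p^a, q^b) = 1$, the coset decompositions multiply without interference, and by the Chinese Remainder Theorem the product double coset is insensitive to the order of multiplication. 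Concretely, I would show that $(\gamr \diag[\dots] \gamr)$ for a $p$-power-determinant matrix times one for a $q$-power-determinant matrix equals a single double coset counted with multiplicity one in both orders, using that $GL_r(\Z) \backslash \dr / GL_r(\Z)$ factors as a restricted product over $p$ of the local double-coset spaces.

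\textbf{Assertion (3).} To see that $\rr$ is generated by the $\rrp$, I would again use the factorization of double cosets by the elementary divisor theorem: an arbitrary double coset $\gamr \diag[d_1,\dots,d_r] \gamr$ corresponds via CRT to a product over the primes $p$ dividing $n$ of the local double cosets determined by the $p$-adic valuations of the $d_i$. I would verify that this product, taken in $\rr$, recovers exactly the given double coset with coefficient $1$ (no lower-order double-coset terms appear, because the elementary divisors of a product of coprime-determinant matrices are uniquely determined and the multiplicity is multiplicative). Thus every basis element of $\rr$ lies in the subring generated by the images of the $\rrp$, proving surjectivity of the natural multiplication map $\bigotimes_p \rrp \to \rr$.

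\textbf{Main obstacle.} The delicate step is the multiplicativity-with-multiplicity-one computation in (2) and (3): one must confirm that when multiplying double cosets supported at coprime primes the structure constant is exactly $1$ and no extraneous double cosets are produced. This rests on the fact that the number of right cosets in $\gamr \alpha \gamr$ and the elementary divisors behave multiplicatively across coprime local components, which in turn follows from the strong approximation / CRT decomposition $GL_r(\Z_p)$-orbit data assembling into the global $GL_r(\Z)$-orbit data. Once this coprime-factorization lemma is in hand, all three assertions follow formally; the rest is the bookkeeping of translating between local and global double cosets via \cite[Proposition 3.16]{S2}.
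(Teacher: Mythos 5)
Your proposal is correct and follows essentially the same route as the paper, which proves this theorem simply by citing \cite[Proposition 3.16]{S2} together with the elementary divisor theorem --- exactly the two ingredients you invoke. Your expanded bookkeeping (local-to-global embedding of double cosets, coprime factorization with structure constant $1$, and generation via the elementary-divisor parametrization) is precisely the standard argument those citations encapsulate.
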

\bigskip

We next define our Hecke rings. Throughout this paper, by an algebra we mean an abelian group with a bi-additive product (e.g., an associative algebra, a Lie algebra).
Let $L$ be an algebra which is free of rank $r$ as an abelian group, and fix a $\Z$-basis of $L$. Then $\Aut_{\Q}^{alg}(L \otimes \Q)$, $\Aut_{\Q_p}^{alg}(L \otimes \Q_p)$, $\End_{\Z}^{alg}(L)$, and $ \End_{\Z_p}^{alg}(L \otimes \Z_p)$ are all identified with subsets of $M_r(\Q_p)$. We introduce the following notation.
\[
	\begin{array}{llllll}
		\gl &=& \Aut_{\Q}^{alg}(L \otimes \Q), & \glp &=& \Aut_{\Q_p}^{alg}(L \otimes \Q_p), \\ \rule{0ex}{3ex}
		 \dl &=& \End_{\Z}^{alg}(L) \cap \gl, & \dlp &=& \End_{\Z_p}^{alg}(L \otimes \Z_p) \cap \glp, \\ \rule{0ex}{3ex}
		 \gaml  &=& \Aut_{\Z}^{alg}(L), & \gamlp  &=& \Aut_{\Z_p}^{alg}(L \otimes \Z_p).
	\end{array}
\]
By \cite[Proposition 2.1]{Hy}, one can define the Hecke rings $\rl$ and $\rlp$ with respect to $(\gaml, \dl)$ and $(\gamlp,\dlp)$, respectively.
Although the proposition deals only with the case where $L$ is a Lie algebra, its proof can be easily applied to our case as well. 
Our Hecke rings indeed generalize the Hecke rings $\rr$ and $\rrp$: If $L$ is the ring $\Z^r$ of the direct sum of $r$-copies of $\Z$, then we have $\dl = \dr, \gamlp = \gamrp, \rl = \rr$, and $\rlp = \rrp$. 

%
\section{Hecke rings associated with profinite completions of algebras}\label{section:profin}\quad 

Let $L$ be as in the previous section. First, we introduce a Hecke ring associated with the profinite completion $\hl$ of $L$. Let us define $\hgl$ by the set of elements $(\alpha_p)_p$
of $\prod_p \glp$ such that $\alpha_p \in \gamlp$ for almost all $p$, where the product is taken over all prime numbers $p$. $\hdl$ and $\hgaml$ denote $\left(\prod_p \dlp\right) \cap \hgl$ 
and $\prod_p \gamlp$, respectively.
Then it is easy to see that the $\hdl$ is contained in the commensurator of $\hgaml$ in $\hgl$. Hence, one can define the Hecke ring $\hrl$ with respect to $(\hgaml, \hdl)$. 
Since the monoids $\dlp$ are all naturally contained in $\hdl$ and commute with each other in $\hdl$, the Hecke ring $\hrl$ satisfies the following properties.
\begin{prop}\label{prop:structureofhrl} The following assertions hold.
\begin{enumerate}
\item For each $p$, the local Hecke ring $\rlp$ is embedded into $\hrl$ by the map derived by the natural inclusion $\dlp \subset \hdl$.
\item The local Hecke rings $\rlp$ commute with each other in $\hrl$.
\item $\hrl$ is generated by the local Hecke rings $\rlp$ as a ring.
\end{enumerate} 
\end{prop}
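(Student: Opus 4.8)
The plan is to derive all three assertions from a single \emph{disjoint-support multiplicativity} lemma, exploiting the fact that $\hgaml = \prod_p \gamlp$ is a genuine direct product while $\hdl$ is the restricted product of the local monoids $\dlp$ (restricted with respect to the $\gamlp$). For $\delta = (\delta_p)_p \in \hdl$, I would call the finite set of primes with $\delta_p \notin \gamlp$ the \emph{support} of $\delta$, and write $\iota_p \colon \dlp \hookrightarrow \hdl$ for the inclusion sending $\alpha$ to the tuple equal to $\alpha$ at $p$ and to the identity elsewhere; its image consists of the elements whose support is contained in $\{p\}$. The central lemma to isolate is: if $\alpha,\beta \in \hdl$ have disjoint supports, then
\[ (\hgaml\alpha\hgaml)(\hgaml\beta\hgaml) = \hgaml\alpha\beta\hgaml \quad\text{in } \hrl, \]
a single double coset occurring with coefficient $1$.

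First I would prove the lemma by comparing left-coset decompositions. Because $\hgaml$ is a direct product and $\alpha$ is integral away from its support $S$, one may choose a decomposition $\hgaml\alpha\hgaml = \bigsqcup_{i} \hgaml\alpha_i$ with each $\alpha_i$ supported on $S$, whose components at $p \in S$ run through local left-coset representatives of $\gamlp\delta_p\gamlp$; likewise $\hgaml\beta\hgaml = \bigsqcup_{j}\hgaml\beta_j$ with $\beta_j$ supported on $T$. Each local decomposition is finite because $\dlp$ lies in the commensurator of $\gamlp$. As $S \cap T = \varnothing$, at every prime the product $\alpha_i\beta_j$ inherits its component from exactly one factor, so $\hgaml\alpha_i\beta_j = \hgaml\alpha_{i'}\beta_{j'}$ forces $i=i'$ and $j=j'$ place-by-place; thus the $\alpha_i\beta_j$ represent pairwise distinct left cosets, all contained in $\hgaml\alpha\beta\hgaml$. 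Their number equals the product of the local coset numbers, which (again by the product structure) is exactly the number of left cosets in $\hgaml\alpha\beta\hgaml$. Hence in the Hecke product every left coset of $\hgaml\alpha\beta\hgaml$ is hit once, the structure constant is $1$, and no other double coset appears.

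Granting the lemma, the three assertions follow quickly. For (1), $\iota_p$ identifies the left cosets of $\gamlp\alpha\gamlp$ with those of $\hgaml\iota_p(\alpha)\hgaml$ and respects products via $\iota_p(\alpha_i)\iota_p(\beta_j) = \iota_p(\alpha_i\beta_j)$; therefore the structure constants of $\rlp$ coincide with those of its image, giving a ring homomorphism $\rlp \to \hrl$, which is injective since elements supported only at $p$ lie in the same $\hgaml$-double coset iff their $p$-components lie in the same $\gamlp$-double coset. For (2), if $\alpha \in \dlp$ and $\beta \in \Delta_{L_q}$ with $p \neq q$, then $\iota_p(\alpha)$ and $\iota_q(\beta)$ commute already in the monoid $\hdl$ (they act on disjoint coordinates), and the lemma upgrades this to $(\hgaml\iota_p(\alpha)\hgaml)(\hgaml\iota_q(\beta)\hgaml) = \hgaml\iota_p(\alpha)\iota_q(\beta)\hgaml = (\hgaml\iota_q(\beta)\hgaml)(\hgaml\iota_p(\alpha)\hgaml)$. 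For (3), any $\delta \in \hdl$ with support $\{p_1,\dots,p_n\}$ factors in the monoid as $\delta = \iota_{p_1}(\delta_{p_1})\cdots\iota_{p_n}(\delta_{p_n})$ with pairwise disjoint supports, so iterating the lemma gives $\hgaml\delta\hgaml = \prod_{m}\bigl(\hgaml\iota_{p_m}(\delta_{p_m})\hgaml\bigr)$; since the double cosets form a $\Z$-basis of $\hrl$ and each factor lies in the image of a local $\rlp$, the local Hecke rings generate $\hrl$ as a ring.

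The only nonroutine point, and hence the main obstacle, is the coset count inside the central lemma: one must check that a left-coset decomposition of a double coset in $\hrl$ genuinely factors as the restricted product of the local decompositions, and that distinct local data yield distinct global left cosets, so that the structure constants collapse to $1$. This rests squarely on the direct-product form of $\hgaml$ together with the finiteness of each local decomposition, which is exactly the commensurability already noted for $(\hgaml,\hdl)$; once it is in place, assertions (1)--(3) are formal.
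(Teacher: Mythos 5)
Your proof is correct and is essentially the paper's (implicit) argument made rigorous: the paper justifies this proposition in a single sentence by appealing to the fact that the local monoids $\dlp$ sit inside $\hdl$ and commute with each other there, and your disjoint-support multiplicativity lemma is exactly the precise content of that remark, exploiting that $\hgaml$ is the full product $\prod_p \gamlp$ so that left-coset decompositions of double cosets factor place by place (the same idea underlies Shimura's Proposition 3.16, which the paper cites for the classical analogue, Theorem \ref{thm:Hecke-Tamagawa}). One cosmetic slip in your assertion (3): an element $\delta \in \hdl$ with support $\{p_1,\dots,p_n\}$ equals $\iota_{p_1}(\delta_{p_1})\cdots\iota_{p_n}(\delta_{p_n})$ only up to a left factor in $\hgaml$, since its components at primes outside the support lie in $\gamlp$ but need not be the identity; this changes nothing, as the relevant $\hgaml$-double cosets coincide.
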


We next construct an additive map $\etaul:\hrl \to \rl$. 
Let us fix a $\Z$-basis of $L$, and let $p$ be a prime number. Then $\dl$ and $\dlp$ are identified with subsets of $M_r(\Q)$ and $M_r(\Q_p)$, respectively. Moreover, in this sense, $\dl$ is contained in $\dlp$.
The map $\etal$ then denotes the diagonal embedding of $\dl$ into $\prod_p \dlp$. Clearly, $\etal(\dl)$ is a subset of $\hdl$.
Then, we define the additive map $\eta^*_L : \hrl \to \rl$ by $\left(\hgaml \halpha \hgaml \mapsto \sum \gaml \beta \gaml\right)$, where $\gaml \beta \gaml$ runs through the set $\gaml \bs \etal^{-1}(\hgaml \halpha \hgaml) / \gaml$. 
%
%
Note that its kernel is the $\Z$-submodule of $\hrl$ generated by elements $\hgaml \halpha \hgaml$ with $\etal^{-1}(\hgaml \halpha \hgaml) = \emptyset$.

Next, we explain a relation between $\hrl$ and $\hl$: 
Since $\hl$ is isomorphic to $\prod_p (L \otimes \Z_p)$, the group $\hgaml$ is naturally isomorphic to $\Aut_{\hZ}^{alg}(\hl)$. Moreover,
the monoid $\hdl$ is identified with the set of elements $\halpha$ of $\End_{\hZ}^{alg}(\hl)$ such that $\hl^{\halpha}$ are $\hZ$-subalgebras of $\hl$ of finite index isomorphic to $\hl$. Hence, $\etal$ coincides with the restriction to $\dl$ of the map from $\End_{\Z}^{alg}(L)$ into $\End_{\hZ}^{alg}(\hl)$ induced by extension of scalars along the natural inclusion $\Z \to \hZ$.

Furthermore, by considering a $\Z$-basis of $L$, the monoid $\hdl$ and the group $\hgaml$ are identified with a submonoid of $M_r(\hZ)$ and a subgroup of $GL_r(\hZ)$, respectively. In this sense, $\etal$ is nothing but the natural inclusion.
\smallskip

Next, we give a sufficient condition for the multiplicativity and the injectivity of $\etaul$. Let us denote by $\etadl: \gaml \bs \dl \to \hgaml \bs \hdl$ the map induced by $\etal$.

\begin{lemma}\label{lemma:bijectivityandmultiplicative}
If the map $\etadl$ is bijective, then $\etaul$ is multiplicative and injective.
\end{lemma}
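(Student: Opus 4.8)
The plan is to reduce both assertions to the combinatorics of the left-coset decompositions underlying Hecke multiplication. Recall that for a double coset $\hgaml\halpha\hgaml = \bigsqcup_i \hgaml\hat{a}_i$ the structure constants of $\hrl$ are computed by counting, for fixed representatives, the pairs of left cosets whose product lands in a prescribed left coset, and likewise for $\rl$. The first thing I would record is that, since $\etal$ is a homomorphism of monoids, each preimage $\etal^{-1}(\hgaml\halpha\hgaml)$ is a union of $\gaml$-double cosets, and that the hypothesis that $\etadl$ is bijective yields a bijection between the $\gaml$-left-cosets contained in $\etal^{-1}(\hgaml\halpha\hgaml)$ and the $\hgaml$-left-cosets contained in $\hgaml\halpha\hgaml$, given explicitly by $\gaml\delta \mapsto \hgaml\etal(\delta)$. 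In particular the number of left cosets (the degree) is preserved.

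For injectivity I would invoke the description of the kernel recalled just before the statement: $\Ker\etaul$ is spanned by the basis elements $\hgaml\halpha\hgaml$ whose preimage under $\etal$ is empty. Surjectivity of $\etadl$ forces every such preimage to be nonempty, since any left coset $\hgaml\hat{a} \subset \hgaml\halpha\hgaml$ equals $\hgaml\etal(\delta)$ for some $\delta\in\dl$, whence $\delta\in\etal^{-1}(\hgaml\halpha\hgaml)$. Thus $\Ker\etaul=0$. (Independently, distinct $\hgaml$-double cosets have disjoint preimages, so the nonzero images of basis elements involve pairwise disjoint families of $\gaml$-double cosets and are automatically linearly independent.)

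For multiplicativity I would compare, for each $\gaml$-double coset $\gaml\zeta\gaml$, its coefficient in $\etaul\!\left((\hgaml\halpha\hgaml)(\hgaml\hbeta\hgaml)\right)$ with its coefficient in $\etaul(\hgaml\halpha\hgaml)\,\etaul(\hgaml\hbeta\hgaml)$. Let $\hgaml\hat{\xi}\hgaml$ be the double coset containing $\etal(\zeta)$. On the first side, $\gaml\zeta\gaml$ occurs (with coefficient $1$) only inside $\etaul(\hgaml\hat{\xi}\hgaml)$, so its coefficient equals the multiplicity of $\hgaml\hat{\xi}\hgaml$ in the product, namely $\#\{(i,j) : \hgaml\hat{a}_i\hat{b}_j = \hgaml\etal(\zeta)\}$. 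On the second side, writing the two images as sums of $\gaml$-double cosets and choosing representatives $a_i,b_j\in\dl$ of all the left cosets in their respective preimages, the coefficient is $\#\{(i,j) : \gaml a_i b_j = \gaml\zeta\}$. Because $\etal$ is multiplicative and $\etadl$ is bijective, $\gaml a_i b_j = \gaml\zeta \iff \hgaml\etal(a_i)\etal(b_j) = \hgaml\etal(\zeta)$; and since the Hecke structure constants are independent of the chosen coset representatives, I may take $\hat{a}_i=\etal(a_i)$ and $\hat{b}_j=\etal(b_j)$ as representatives of the left cosets in $\hgaml\halpha\hgaml$ and $\hgaml\hbeta\hgaml$ (legitimate by the bijection of the first paragraph). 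The two counts then coincide verbatim, giving multiplicativity.

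The main obstacle is the bookkeeping in this last step: one must first verify that the structure constants do not depend on the choice of left-coset representatives — which rests on the fact that right multiplication by $\gaml$ merely permutes the left cosets inside a double coset — so that the $\etal$-images are admissible representatives and the counting problem on the $\hdl$ side is transported to the counting problem on the $\dl$ side. Everything else is formal once the bijection $\gaml\delta\mapsto\hgaml\etal(\delta)$ between the relevant sets of left cosets is established.
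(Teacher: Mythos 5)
Your proposal is correct and follows essentially the same route as the paper: the paper likewise gets injectivity from the kernel description plus surjectivity of $\etadl$, and proves multiplicativity by matching Hecke structure constants, using the bijection on left cosets induced by $\etadl$ to show the counting sets on the $\hdl$ side and the $\dl$ side have equal cardinality. The only cosmetic difference is that you count pairs $(i,j)$ of left-coset representatives, while the paper phrases the same structure constants as cardinalities of sets $\hcalx_{\hxi}$ and $\calx_{\xi}$ of left cosets $\gamma$ with $\xi\gamma^{-1}$ in the first double coset (citing its Proposition 2.4), which is an equivalent formulation.
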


\newcommand{\hgamma}{\rule{0ex}{3ex}\widehat{\gamma}}
\newcommand{\hxi}{\widehat{\xi}}
\newcommand{\hcalx}{\widehat{\mathcal{X}}}
\newcommand{\calx}{\mathcal{X}}


\begin{proof}
The injectivity is clear. We then show the multiplicativity. 
Let $\hgaml \halpha \hgaml$ and $\hgaml \hbeta \hgaml$ be elements of $\hrl$. 
Put \eqna{
	\hcalx_{\hxi} &=& 
	\left\{ 
			\hgaml \hgamma \in \hgaml \bs \hgaml \hbeta \hgaml \ 
			| \ \hxi \hgamma^{-1} \in \hgaml 	\halpha \hgaml 
		\right\}
	\quad \text{for each $\hxi \in \hdl$}, 	
\\
	\calx_\xi &=&
	\left\{ 
			\gaml \gamma \in \gaml \bs \etal^{-1}(\hgaml \hbeta \hgaml) \ 
			| \ \xi \gamma^{-1} \in \etal^{-1}(\hgaml \halpha \hgaml) 
		\right\}
	\quad \text{for each $\xi \in \dl$}.
}
By \cite[Propositon 2.4]{Hy}, the product $(\hgaml \halpha \hgaml) \cdot (\hgaml \hbeta \hgaml)$ equals 
\[
\sum_{\hgaml \hxi \hgaml \in \hgaml \bs \hdl / \hgaml} 
	\left| 
	\hcalx_{\hxi}
	\right| 
\hgaml \hxi \hgaml.
\]
Note that $|\hcalx_{\hxi} | = |\hcalx_{\hxi'}|$ if $\hgaml\hxi \hgaml = \hgaml\hxi' \hgaml $,
then its image under $\etaul$ is equal to
\[
\sum_{\gaml \xi \gaml \in \gaml \bs \dl / \gaml} 
	\left| \hcalx_{\etal(\xi)}\right| 
\gaml \xi \gaml.
\]
On the other hand, the product $\etaul(\hgaml \halpha \hgaml) \cdot \etaul(\hgaml \hbeta \hgaml)$ is equal to
\[
\sum_{\gaml  \xi \gaml \in \gaml \bs \dl / \gaml} 
	\left| \calx_{\xi}\right| 
\gaml \xi \gaml.
\]
By assumption, $\calx_{\xi}$ and $\hcalx_{\etal(\xi)}$ have the same cardinality for each $\xi \in \dl$, which completes the proof. 
\end{proof}
\medskip

%
%


\medskip

For instance, we prove the bijectivity of $\etadl$ in the case where $L$ is the ring $\Z^r$ of the direct sum of $r$-copies of $\Z$.

\begin{lemma}\label{lemma:etadZr:bijective}
The map $\etadZr$ is bijective.
\end{lemma}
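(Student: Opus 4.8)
The plan is to translate everything into the language of lattices and then invoke the standard local--global principle. Recall that for $L=\Z^r$ we have $\gamZr = GL_r(\Z)$, $\dZr = M_r(\Z)\cap GL_r(\Q)$, while $\hgamZr = GL_r(\hZ)$ and $\hdZr$ consists of the $\halpha=(\alpha_p)_p$ with $\alpha_p\in M_r(\Z_p)\cap GL_r(\Q_p)$ and $\alpha_p\in GL_r(\Z_p)$ for almost all $p$. I would first set up the dictionary between left cosets and overlattices: the assignment $\gamZr\xi\mapsto \xi^{-1}\Z^r$ is a bijection from $\gamZr\bs\dZr$ onto the set of lattices $N$ with $\Z^r\subseteq N\subseteq\Q^r$ and $[N:\Z^r]<\infty$ (well-definedness is immediate since $(\gamma\xi)^{-1}\Z^r=\xi^{-1}\gamma^{-1}\Z^r=\xi^{-1}\Z^r$, and surjectivity follows by taking $\xi$ to be the inverse of a matrix whose columns form a $\Z$-basis of $N$). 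The same recipe gives a bijection from $\hgamZr\bs\hdZr$ onto the finite-index $\hZ$-overmodules $\hat N\supseteq\hZ^r$, each necessarily of the form $\hat N=\prod_p N_p$ with $N_p\subseteq\Q_p^r$ a $\Z_p$-overlattice of $\Z_p^r$ equal to $\Z_p^r$ for almost all $p$. Under these identifications the induced map $\etadZr$ becomes simply $N\mapsto N\otimes\hZ=\prod_p(N\otimes\Z_p)$, since $\xi^{-1}\Z_p^r=(\xi^{-1}\Z^r)\otimes\Z_p$.

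For injectivity I would argue that a global overlattice is recovered from its completion by $N=(N\otimes\hZ)\cap\Q^r$, using $\Z^r=\hZ^r\cap\Q^r$ inside $(\prod_p\Q_p)^r$; hence $N\otimes\hZ=N'\otimes\hZ$ forces $N=N'$. Equivalently, and perhaps more transparently at the matrix level: if $\etaZr(\xi')=\hat\gamma\,\etaZr(\xi)$ with $\hat\gamma\in\hgamZr$, then the single rational matrix $g=\xi'\xi^{-1}$ lies in $GL_r(\Z_p)$ for every $p$, so $g\in\bigcap_p GL_r(\Z_p)\cap GL_r(\Q)=GL_r(\Z)=\gamZr$, and $\gamZr\xi'=\gamZr\xi$. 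This step is entirely formal.

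Surjectivity is the crux, and it is exactly the classical local--global existence of a lattice with prescribed localizations. Given $\hat N=\prod_p N_p$ as above, I would set $N:=\hat N\cap\Q^r$ and prove $N\otimes\Z_p=N_p$ for every $p$, so that $N\otimes\hZ=\hat N$. The cleanest route is to pass to the quotient by $\Z^r$: the torsion group $\Q^r/\Z^r$ is canonically isomorphic to $\bigoplus_p(\Q_p/\Z_p)^r$ because $\Q/\Z\cong\bigoplus_p\Q_p/\Z_p$, and under this isomorphism a finite subgroup of the global side corresponds term-by-term to the finite subgroup $\bigoplus_p N_p/\Z_p^r$ of the adelic side (the direct sum being finite precisely because $N_p=\Z_p^r$ for almost all $p$). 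Thus $N/\Z^r$ is the finite subgroup matching $\hat N/\hZ^r$, which gives both that $N$ is a genuine finite-index overlattice in $\dZr$ and that its $p$-adic completions are the prescribed $N_p$.

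The only real content is this last paragraph; it is the same input (CRT on the finite quotient, i.e.\ the elementary-divisor/local--global principle for lattices) that underlies \cite[Proposition 3.16]{S2}, specialized to $\Z^r$. Everything surrounding it --- the coset-to-overlattice dictionary and the injectivity --- is bookkeeping, so I expect the write-up to hinge on stating the decomposition $\Q^r/\Z^r\cong\bigoplus_p(\Q_p/\Z_p)^r$ cleanly and reading off the matching of finite subgroups.
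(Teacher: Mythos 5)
Your proof is correct and takes essentially the same route as the paper: both translate the coset spaces into sets of lattices and show that $-\otimes\hZ$ and intersection back with the global points are mutually inverse bijections. The only differences are cosmetic --- the paper works with finite-index submodules $M\subseteq\Z^r$ (inverse map $\widehat{M}\mapsto\widehat{M}\cap\Z^r$) where you work with finite-index overlattices $N\supseteq\Z^r$ (inverse $\widehat{N}\mapsto\widehat{N}\cap\Q^r$), and you spell out the local--global details (the decomposition of $\Q^r/\Z^r$ into its $p$-parts) that the paper leaves implicit.
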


\newcommand{\Mr}{\mathcal{M}_r}
\newcommand{\hMr}{\rule{0ex}{3ex}\widehat{\mathcal{M}}_r}
\newcommand{\Azr}{\mathcal{A}_{\Z^r}}
\newcommand{\hAzr}{\rule{0ex}{3ex}\widehat{\mathcal{A}}_{\Z^r}}

\begin{proof}
Let $\Mr$ $(resp. \ \hMr )$ be the set of $\Z$ $(resp. \ \hZ)$ submodules of $\Z^r$ $(resp. \ \hZ^r)$ of finite index.
Then, they are naturally identified with $\gamZr \bs \dZr$ and $\hgamZr \bs \hdZr$, respectively. Moreover, the map $\etadZr$ is nothing but the map $\Mr \to \hMr$ being $(M \mapsto M \otimes \hZ)$, whose inverse map is given by $(\rule{0ex}{3ex}\widehat{M} \mapsto \widehat{M} \cap \Z^r)$.
This completes the proof.
\end{proof}
\medskip

%

\newcommand{\gzt}{ G_{\Z^2}}
\newcommand{\dzt}{ \Delta_{\Z^2}}
\newcommand{\gamzt}{ \Gamma_{\Z^2}}
\newcommand{\hgamzt}{\rule{0ex}{3ex}\widehat{\Gamma}_{\Z^2}}
\newcommand{\hdzt}{\rule{0ex}{3ex}\widehat{\Delta}_{\Z^2}}
Let $\Hei$ denote the Heisenberg Lie algebra. In the rest of this section, we study the maps $\etadH$ and $\etauH$. To begin with, let us recall some results in \cite[Section 3]{Hy}. 
Let us regard $\gzt$, $\dzt$ and $\gamzt$ as subsets of $GL_2(\Q)$, $M_2(\Z)$ and $GL_2(\Z)$, respectively. Then, $\gH$ is identified with the following group.
\begin{enumerate}
\item The underlying set is $\gzt \times \Q^2$.
\item For any two elements $(A,\va)$ and $(B,\vb)$ of $\gzt \times \Q^2$, their product $(A,\va)(B,\vb)$ is defined to be $\left(AB, A\vb + |B|\va\right)$, where $\va$ and $\vb$ are column vectors, and $|B|$ is the determinant of $B$.
\end{enumerate}
Clearly, we have $\dH = \dzt \times \Z^2$ and $\gamH = \gamzt \times \Z^2$.
Furthermore, a similar argument shows that $\hdH = \hdzt \times \hZ^2$ and $\hgamH = \hgamzt \times \hZ^2$. Certainly, $\etaH$ is the canonical inclusion $\dzt \times \Z^2 \subset \hdzt \times \hZ^2$.
\medskip

Now, we prove the bijectivity of $\etadH$.

\begin{lemma}\label{lemma:etadHbijectivity}
$\etadH$ is bijective.
\end{lemma}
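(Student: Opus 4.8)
The plan is to mimic the structure of the proof of Lemma~\ref{lemma:etadZr:bijective}, realizing both sides of $\etadH$ as sets of concrete subalgebras and then exhibiting an explicit inverse via intersection with the integral lattice. Using the identification $\dH = \dzt \times \Z^2$ and $\gamH = \gamzt \times \Z^2$ (and the corresponding hatted statements), I would first reinterpret $\gamH \bs \dH$ as the set of $\Z$-subalgebras of $\Hei$ of finite index that are isomorphic to $\Hei$, and likewise $\hgamH \bs \hdH$ as the finite-index $\hZ$-subalgebras of $\hl_\Hei$ isomorphic to $\hl_\Hei$. This is the Heisenberg analogue of the identifications $\Mr \cong \gamZr \bs \dZr$ and $\hMr \cong \hgamZr \bs \hdZr$ used in the previous lemma; the extra subtlety is that not every finite-index sub-lattice is a subalgebra, so the correct objects are finite-index subalgebras, and one must check that conjugacy/left-multiplication by $\gamH$ corresponds exactly to equality of these subalgebras.

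Next I would describe the map $\etadH$ in these terms: it should send a finite-index subalgebra $M \subset \Hei$ to $M \otimes \hZ \subset \hl_\Hei$, exactly as $M \mapsto M \otimes \hZ$ in Lemma~\ref{lemma:etadZr:bijective}. The candidate inverse is $\widehat{M} \mapsto \widehat{M} \cap \Hei$. To prove bijectivity it then suffices to verify three things: (i) $M \otimes \hZ$ really is a finite-index $\hZ$-subalgebra isomorphic to $\hl_\Hei$ whenever $M$ is a finite-index subalgebra of $\Hei$; (ii) $\widehat{M} \cap \Hei$ is a finite-index subalgebra of $\Hei$ whenever $\widehat{M}$ is a finite-index $\hZ$-subalgebra of $\hl_\Hei$; and (iii) the two assignments are mutually inverse, i.e. $(M \otimes \hZ) \cap \Hei = M$ and $(\widehat{M} \cap \Hei) \otimes \hZ = \widehat{M}$. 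The underlying-abelian-group parts of (iii) are exactly the content of Lemma~\ref{lemma:etadZr:bijective} applied to the rank-$3$ lattice $\Hei$, so the real work is confined to the Lie-bracket structure.

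The main obstacle I expect is keeping track of the nonstandard group law on $\gH \cong \gzt \times \Q^2$, where the product is $(A,\va)(B,\vb) = (AB, A\vb + |B|\va)$ with the determinant twist $|B|$. Because of this twist, the condition that a sub-object be a \emph{subalgebra} (rather than merely a sub-lattice) is genuinely a bracket condition coming from the second factor, and one must confirm that it is compatible with tensoring up to $\hZ$ and with intersecting back down. Concretely, I would track how the $\Q^2$-component (respectively $\hZ^2$-component) interacts with the $\gzt$-component under these operations, and check that the bracket-closure condition is preserved in both directions. Once that compatibility is established, the abelian-group bijectivity from Lemma~\ref{lemma:etadZr:bijective} upgrades automatically to a bijection of subalgebras.

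Finally, I would note that the payoff is immediate: combining this with Lemma~\ref{lemma:bijectivityandmultiplicative} shows that $\etauH$ is both multiplicative and injective, which is precisely what is needed for assertions~\ref{assertion:embedding} and~\ref{assertion:commutativity} of the Main Theorem. Thus the cleanest route is to factor the whole argument through the two separate structures — the rank-$3$ lattice (handled by the previous lemma) and the Lie bracket (the new ingredient) — rather than attempting a direct matrix computation on the $\gzt \times \Q^2$ model.
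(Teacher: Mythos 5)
Your dictionary is set up correctly: $\gamH \bs \dH$ is identified with the set of finite-index $\Z$-subalgebras of $\Hei$ that are \emph{isomorphic to} $\Hei$, the quotient $\hgamH \bs \hdH$ with the set of finite-index $\hZ$-subalgebras of $\Hei\otimes\hZ$ isomorphic to $\Hei\otimes\hZ$, and $\etadH$ becomes $M \mapsto M\otimes\hZ$; injectivity then does follow from the lattice statement. The gap is in your step (ii) and in the sentence claiming that the abelian-group bijectivity ``upgrades automatically to a bijection of subalgebras.'' What your compatibility checks (bracket closure is preserved by $M\mapsto M\otimes\hZ$ and by $\widehat{M}\mapsto\widehat{M}\cap\Hei$), combined with Lemma~\ref{lemma:etadZr:bijective} in rank $3$, actually give is a bijection between the set of \emph{all} finite-index subalgebras of $\Hei$ and the set of \emph{all} finite-index $\hZ$-subalgebras of $\Hei\otimes\hZ$. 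But $\etadH$ is the restriction of this bijection to the subalgebras isomorphic to $\Hei$ (resp.\ to $\Hei\otimes\hZ$), and these are \emph{proper} subsets: for the standard basis $x,y,z$ of $\Hei$ with $[x,y]=z$ central, the span $M$ of $px,\,y,\,z$ is a finite-index subalgebra with $[M,M]=p\Z z$, so $M/[M,M]$ has $p$-torsion and $M\not\cong\Hei$. Consequently, surjectivity of $\etadH$ requires exactly the statement your outline never addresses and which bracket-closure compatibility cannot supply: if $\widehat{M}\subset\Hei\otimes\hZ$ is a finite-index $\hZ$-subalgebra isomorphic to $\Hei\otimes\hZ$, then $\widehat{M}\cap\Hei$ is isomorphic to $\Hei$, not merely a finite-index subalgebra. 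This is a local-to-global (``trivial genus'') assertion about isomorphism classes, it is the real content of the lemma, and it can fail for a general algebra $L$ --- which is precisely why the paper proves bijectivity of the map $\eta_{*L}$ only case by case rather than by a formal transfer argument.

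The missing step is provable for $\Hei$, so your route can be repaired: since the bracket is bi-additive and $\hZ$ is faithfully flat over $\Z$, one has $[M\otimes\hZ,M\otimes\hZ]=[M,M]\otimes\hZ$, hence $M/[M,M]$ is torsion-free if and only if $(M\otimes\hZ)/[M\otimes\hZ,M\otimes\hZ]$ is; and a finite-index subalgebra $M\subset\Hei$ with $M/[M,M]$ torsion-free is isomorphic to $\Hei$ (lift a basis of the rank-$2$ image of $M$ in $\Hei/\Z z$ to $u,v\in M$; then $M=\Z u\oplus\Z v\oplus(M\cap\Z z)$, $[M,M]=\Z[u,v]$, and torsion-freeness forces $[u,v]$ to generate $M\cap\Z z$). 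With such an argument included, your proof would be a genuine alternative to the paper's. The paper itself sidesteps the genus question entirely: it quotes \cite[Lemma 3.2]{Hy} to obtain explicit complete systems of representatives $(A,\va)$ of $\gamH\bs\dH$ and of $\hgamH\bs\hdH$, with $A$ running over representatives of $\Gamma_{\Z^2}\bs\Delta_{\Z^2}$ (resp.\ its profinite analogue) and $\va$ over $\Z^2/|A|\Z^2$ (resp.\ $\hZ^2/|A|\hZ^2$); then Lemma~\ref{lemma:etadZr:bijective} for $r=2$ and the isomorphism $\Z^2/|A|\Z^2\cong\hZ^2/|A|\hZ^2$ allow one and the same set of representatives to serve for both quotients, making bijectivity of $\etadH$ immediate.
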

\begin{proof}
\newcommand{\mcd}{\mathcal{D}}
\newcommand{\hmcd}{\rule{0ex}{3ex}\widehat{\mathcal{D}}}
\newcommand{\mcz}{\mathcal{Z}}
\newcommand{\hmcz}{\rule{0ex}{3ex}\widehat{\mathcal{Z}}}

Let $\mcd$ (resp. $\hmcd$) be a complete system of representatives of $\gamzt\bs\dzt$ (resp. $\hgamzt\bs\hdzt$), and let $\mcz_A$ (resp. $\hmcz_A$) be a complete system of representatives of $\Z^2/|A|\Z^2$ (resp. $\hZ^2/|A|\hZ^2)$ for each $A \in \dzt$ (resp. $A \in \hdzt$). Then, by \cite[Lemma 3.2]{Hy}, the sets 
\[\{(A,\va) \ |\  A \in \mcd,\ \va \in \mcz_A\}\ \text{and} \ \{(A,\va) \ |\  A \in \hmcd,\ \va \in \hmcz_A\}\]
are complete systems of representatives of $\gamH \bs \dH$ and $\hgamH \bs \hdH$, respectively.
Lemma \ref{lemma:etadZr:bijective} implies that the natural map from $\gamzt\bs\dzt$ to $\hgamzt\bs\hdzt$ is bijective, which means that one can choose $\hmcd$ contained in $\dzt$. 
For an element $A$ of $\dzt$, $\Z^2/|A|\Z^2$ is isomorphic to $\hZ^2/|A|\hZ^2$, and thus one can choose $\hmcz_A$ contained in $\Z^2$ as well.
This completes the proof. 
\end{proof}
\medskip

By Lemmas \ref{lemma:bijectivityandmultiplicative} and \ref{lemma:etadHbijectivity}, we obtain our desired property.

\begin{prop}\label{prop:multiplicativityetauH}
The map $\etauH$ is an injective ring homomorphism.
\end{prop}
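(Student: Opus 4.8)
The plan is to assemble the desired conclusion directly from the two lemmas we have just established, treating Proposition \ref{prop:multiplicativityetauH} as essentially a corollary. The target statement asserts that $\etauH$ is both an injective ring homomorphism, so I would separate the claim into its additive, multiplicative, and injectivity components and discharge each in turn. The additivity is already built into the definition of $\etaul$, since it was constructed as an additive map $\hrl \to \rl$; for $L = \Hei$ this is nothing but $\etauH$, so no work is needed there.

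First I would invoke Lemma \ref{lemma:etadHbijectivity}, which tells us that the induced map $\etadH: \gamH \bs \dH \to \hgamH \bs \hdH$ is bijective. This is precisely the hypothesis of Lemma \ref{lemma:bijectivityandmultiplicative} in the case $L = \Hei$. Feeding this bijectivity into Lemma \ref{lemma:bijectivityandmultiplicative} yields immediately that $\etauH$ is multiplicative and injective. Combining this with the additivity noted above, $\etauH$ is an additive, multiplicative, injective map between the Hecke rings $\hrH$ and $\rH$.

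The only genuinely new thing to verify is that $\etauH$ respects the multiplicative identities, i.e.\ that it is a ring \emph{homomorphism} in the unital sense rather than merely a multiplicative additive map. I would check that the identity double coset $\hgamH \cdot 1 \cdot \hgamH$ (the class of the identity automorphism) maps to the identity double coset $\gamH \cdot 1 \cdot \gamH$ in $\rH$. Since $\etal^{-1}$ of the identity class is again the identity class, and $\etadH$ is a bijection fixing this class, the coefficient computation in the definition of $\etaul$ gives coefficient $1$ on $\gamH \cdot 1 \cdot \gamH$ and nothing else, so the unit goes to the unit. With multiplicativity, additivity, unit-preservation, and injectivity all in hand, the statement follows.

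I do not expect any real obstacle here, since the substantive content has been front-loaded into Lemmas \ref{lemma:etadZr:bijective}, \ref{lemma:etadHbijectivity}, and \ref{lemma:bijectivityandmultiplicative}. If there is a subtle point, it is making sure the hypotheses of Lemma \ref{lemma:bijectivityandmultiplicative} are quoted for the correct algebra (specializing the general $L$ to $\Hei$) and confirming that the additive map $\etaul$ produced earlier is the same object called $\etauH$ in the statement; both are matters of unwinding notation rather than of proof. The proof will therefore read as a short deduction citing the two preceding lemmas.
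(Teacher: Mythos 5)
Your proposal is correct and follows essentially the same route as the paper, which deduces the proposition directly by combining Lemma \ref{lemma:etadHbijectivity} (bijectivity of $\etadH$) with Lemma \ref{lemma:bijectivityandmultiplicative}. Your additional check that the identity double coset maps to the identity double coset is a harmless refinement of a point the paper leaves implicit.
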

\medskip

Therefore, the proposition above and Proposition \ref{prop:structureofhrl} together imply 
our main theorem:

\begin{thm}\label{thm:assertion:onetwo}
The following assertions hold.
\begin{enumerate}
\item For each $p$, the local Hecke ring $\rHp$ is embedded into $\rH$ by $\etauH$.\label{assertion:embedding}
\item The local Hecke rings $\rHp$ commute with each other in $\rH$. \label{assertion:commutativity}
\end{enumerate}
\end{thm}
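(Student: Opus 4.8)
The plan is to deduce both assertions directly from the two results already in hand, namely Proposition \ref{prop:multiplicativityetauH}, which supplies an injective ring homomorphism $\etauH : \hrH \to \rH$, and Proposition \ref{prop:structureofhrl} specialized to $L = \Hei$, which records how the local Hecke rings $\rHp$ sit inside $\hrH$. Since all the substantive work has already been carried out upstream (the bijectivity of $\etadH$ in Lemma \ref{lemma:etadHbijectivity}, feeding the multiplicativity/injectivity criterion of Lemma \ref{lemma:bijectivityandmultiplicative}), the argument here is a formal composition of maps rather than a new computation.

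For assertion \ref{assertion:embedding}, I would first invoke Proposition \ref{prop:structureofhrl}, assertion 1, to obtain for each prime $p$ an injective ring homomorphism $\iota_p : \rHp \hookrightarrow \hrH$ induced by the inclusion $\dHp \subset \hdH$. I would then compose it with $\etauH$: by Proposition \ref{prop:multiplicativityetauH} the map $\etauH$ is an injective ring homomorphism, so the composite $\etauH \circ \iota_p : \rHp \to \rH$ is again an injective ring homomorphism. This is exactly the assertion that $\rHp$ is embedded into $\rH$ by $\etauH$, once ``by $\etauH$'' is read as ``via the composite $\etauH \circ \iota_p$.''

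For assertion \ref{assertion:commutativity}, I would use that commutation relations are transported along ring homomorphisms. By Proposition \ref{prop:structureofhrl}, assertion 2, the images $\iota_p(\rHp)$ and $\iota_q(R_{\mathcal{H}_q})$ commute elementwise in $\hrH$ for distinct primes $p \ne q$. Applying the ring homomorphism $\etauH$ to any relation $xy = yx$ with $x \in \iota_p(\rHp)$ and $y \in \iota_q(R_{\mathcal{H}_q})$ gives $\etauH(x)\etauH(y) = \etauH(y)\etauH(x)$ in $\rH$, so the images of the local Hecke rings commute in $\rH$.

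The only delicate point, and the closest thing to an obstacle, is bookkeeping: one must confirm that ``embedded by $\etauH$'' means the composite $\etauH \circ \iota_p$ rather than a naive restriction of $\etauH$ to a literal subring, and that injectivity of the composite follows from injectivity of both factors. There is no genuine difficulty beyond this; the statement is a clean consequence of Propositions \ref{prop:structureofhrl} and \ref{prop:multiplicativityetauH}, with the crux having been located entirely in the bijectivity of $\etadH$.
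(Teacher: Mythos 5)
Your proposal is correct and follows exactly the paper's route: the paper derives Theorem \ref{thm:assertion:onetwo} in one sentence by combining Proposition \ref{prop:multiplicativityetauH} (that $\etauH$ is an injective ring homomorphism) with Proposition \ref{prop:structureofhrl}, which is precisely your composition $\etauH \circ \iota_p$ and the transport of commutation relations along $\etauH$. Your write-up merely makes explicit the bookkeeping the paper leaves implicit.
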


\section{The nonsurjectivity of $\etauH$}\label{section:nonsurjectivity}\quad 
\newcommand{\tmp}{\gamH \bs \etaH^{-1}(\hgamH \halpha \hgamH)/\gamH}
In this section, we show the nonsurjectivity of  the map $\etauH:\hrH \to \rH$. To prove this, it is obviously sufficient to check that $\tmp$ consists of more than one elements for some $\halpha \in \hdH$. Our aim of this section is to show the following theorem.

\begin{thm}
For each prime number $p$, there exists an element $\halpha$ of $\dHp$ such that $\tmp$ consists of more than one elements.
\end{thm}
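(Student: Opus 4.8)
The plan is to pick, for each prime $p$, an explicit $\halpha\in\dHp$ whose local double coset $\gamHp\halpha\gamHp$ is a single class but whose global pullback splits into several $\gamH$-double cosets, the obstruction being a determinant constraint. First I would make the preimage explicit. Since $\halpha$ is supported at $p$ (the identity at every other prime) and $\etaH$ is the diagonal embedding $\dzt\times\Z^2\subset\hdzt\times\hZ^2$, an element $\beta=(A,\va)\in\dH$ lies in $\etaH^{-1}(\hgamH\halpha\hgamH)$ iff $\beta$ is a unit at every prime $q\neq p$ and $\beta\in\gamHp\halpha\gamHp$. The unit condition away from $p$ simply says $\det A=\pm p^{k}$, so the preimage is the set of $(A,\va)\in\dH$ whose matrix part has $p$-power determinant and which lie in the fixed local double coset at $p$. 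Thus $\tmp$ counts the $\gamH$-double cosets meeting this set.

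Next I would reduce the counting to an orbit problem on vectors. Using $\dH=\dzt\times\Z^2$ together with \cite[Lemma 3.2]{Hy}, a $\gamH$- (resp. $\gamHp$-) double coset whose matrix part has Smith form $S=\diag[d_1,d_2]$ is determined by $S$ together with the orbit of the center-offset $\va\bmod S\Z^2\in\Z^2/S\Z^2$ under the effective group $G=\{M:(d_2/d_1)\mid M_{21}\}$ acting by $\va\mapsto M\va$; the crucial point is that globally $M$ runs through $GL_2(\Z)$, so $\det M=\pm1$, whereas locally $M$ runs through $GL_2(\Z_p)$, so $\det M$ is an arbitrary unit. Because $\etaH$ is $\gamH$-equivariant only for the diagonal copy of $\gamH\subset\hgamH$, the Lemma \ref{lemma:etadHbijectivity} bijection of left cosets shows that the number of classes in $\tmp$ equals the number of $\{\det M=\pm1\}$-orbits sitting inside the single full $G$-orbit that represents $\gamHp\halpha\gamHp$, namely the index $[\,\Z_p^\times:\{\pm1\}\cdot\det(\mathrm{Stab}_G(\va))\,]$ read modulo $d_2$.

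The heart is then to exhibit $S$ and $\va$ for which this index exceeds $1$. I would take $S=\diag[p,p^{3}]$ and $\va={}^t(1,p)$ (for $p=2,3$ a deeper form $\diag[p,p^{m}]$ with $\va={}^t(1,p^{j})$). The computation I expect is that $M\va\equiv\va\bmod S\Z^2$ together with the unit first coordinate forces $M_{11}\equiv1\bmod p$, while the divisibility $(d_2/d_1)=p^{2}\mid M_{21}$ over-determines the second coordinate and forces $M_{22}\equiv1\bmod p$; hence $\det M=M_{11}M_{22}-M_{12}M_{21}\equiv1\bmod p$, so $\det(\mathrm{Stab}_G(\va))$ lands in the principal units $1+p\Z_p$. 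Then $\{\pm1\}\cdot\det(\mathrm{Stab}_G(\va))$ is a proper subgroup of $(\Z/p^{3})^\times$ of index $(p-1)/2$, so the single local class splits into more than one global class for $p\geq5$, and enlarging $m,j$ (to confine the determinant to $1+p^{r}\Z_p$ for suitable $r$) handles $p=2,3$.

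The main obstacle, and the step I expect to be most delicate, is the determinant computation for the stabilizer in the previous paragraph, i.e. proving that the constraints genuinely confine $\det(\mathrm{Stab}_G(\va))$ to a proper subgroup of units. This is exactly the phenomenon absent in the $\Z^{r}=GL_r$ case, where matrix double cosets are classified by Smith form alone and the analogous vector stabilizer always has surjective determinant, so that $\etaZr^*$ stays surjective; the nonzero center-offset of $\Hei$, scaled by the determinant in the group law, is what breaks this. Packaging the count into the explicit cardinality of the $\gamH$-double cosets meeting the preimage is Corollary \ref{cor:explicitcaluculation}, from which the theorem follows for every $p$.
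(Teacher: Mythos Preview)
Your outline follows essentially the same route as the paper: reduce $\tmp$ to an orbit count on $\Z^2/A\Z^2$, identify the global action with the $\det=\pm1$ part of the local effective group, compute $\det$ of the stabilizer, and read off the index $[\,U_0:\pm U_n\,]$. Your concrete choice $S=\diag[p,p^3]$, $\va={}^t(1,p)$ corresponds to $(l,k,j,i)=(1,2,0,1)$, hence $n=\min(i,k-i,l-j)=1$, and indeed this yields $(p-1)/2>1$ only for $p\ge 5$; the paper simply starts with $(l,k,j,i)=(2,4,0,2)$ for odd $p$ and $(3,6,0,3)$ for $p=2$ to cover all primes uniformly. One cosmetic point: the relevant determinant quotient is $U_0/U_l$ (coming from $\det(\gplpk)=U_l$), not $(\Z/d_2)^\times$; the index you compute happens to agree because $U_l\subset\det(\mathrm{Stab})$.

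There is, however, one genuine step you gloss over. You claim that Lemma~\ref{lemma:etadHbijectivity} (bijectivity of $\etadH$) already shows that the $\gamH$-double cosets in the preimage are counted by $\{\det=\pm1\}$-orbits inside the local $G$-orbit. The left-coset bijection gives right-$\gamH$-equivariantly $\gamH\backslash\etaH^{-1}(\hgamH\halpha\hgamH)\cong \hgamH\backslash\hgamH\halpha\hgamH$, but to pass from ``right action of $\gzpk\cap GL_2(\Z)$'' to ``right action of the subgroup of $G_{l,k}$ with $\det=\pm1$'' you need that $\gzpk\cap GL_2(\Z)$ \emph{surjects} onto $G_{l,k}^{\pm}$. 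That surjectivity is not a formal consequence of the left-coset bijection; it is Proposition~\ref{prop:surjectivity}, which the paper proves by an explicit $SL_2$ lifting argument (finding integral $a',b',d'$ with prescribed residues and determinant $1$). Your final appeal to Corollary~\ref{cor:explicitcaluculation} is legitimate precisely because that corollary rests on Proposition~\ref{prop:surjectivity}; just be aware that this is where the actual work hides, not only in the stabilizer-determinant computation.
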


\newcommand{\gHq}{G_{\mathcal{H}_q}}
\newcommand{\dHq}{\Delta_{\mathcal{H}_q}}
\newcommand{\gamHq}{\Gamma_{\mathcal{H}_q}}
Let us fix a prime number $p$ and an element $\halpha$ of $\dHp$. 
Since $\halpha \in \dHp$, we have
\[\hgamH \halpha \hgamH = \gamHp \halpha \gamHp \times \prod_{q\not = p} \gamHq.\]
Hence, it follows from \cite[Proposition 5.1]{Hy} that, to study the set $\tmp$, it is sufficient to consider the case where 
\[
	\halpha =  \left(\diag[p^{l},p^{l+k}],
	  \begin{pmatrix}
	p^j  \nonumber \\
	p^{i+j}   \\ 
	\end{pmatrix}\right)\ 
	\text{with $0 \leq j \leq l$ and $0 \leq i \leq k$}. 
\] 
Then, we put \[A = \diag[p^{l},p^{l+k}], \quad \va = \begin{pmatrix}p^j  \nonumber \\p^{i+j}\end{pmatrix}.\]

\newcommand{\tmpa}{\etaH^{-1}(\hgamH\halpha\hgamH)}
Denote by $\gzpk$ be the subgroup of $\Gamma_{\Z_p^2}$ consisting of the elements whose $(2,1)$ entries are divided by $p^k$, and 
let $A\Z^2$ be the subgroup of $\Z^2$ consisting of the products of the matrix $A$ and column vectors with two entries of integers.
We first prove the following lemma.
\begin{lemma}
The following assertions hold.
\begin{enumerate}
\item For each element $(B,\vb)$ of $\tmpa$, we have $B \in \Gamma_{\Z^2} A \Gamma_{\Z^2}$.
\item For each element $\vb$ of $\Z^2$, $(A,\vb) \in \tmpa$ if and only if $\vb \equiv X\va \mod A\Z_p^2$ for some $X \in \gzpk$.
\item For any two elements $\vb$ and $\vc$ of $\Z^2$, $\gamH(A,\vb)\gamH = \gamH(A,\vc)\gamH$ if and only if $\vb \equiv X\vc \mod A\Z^2$ for some $X \in \gzpk \cap \Gamma_{\Z^2}$.
\end{enumerate}
\end{lemma}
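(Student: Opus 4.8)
The plan is to describe the preimage $\tmpa$ by splitting the profinite double coset into its local components, and then reduce assertions (2) and (3) to a single stabilizer computation for the matrix $A$. Since $\halpha$ is supported at $p$, the displayed factorization of $\hgamH\halpha\hgamH$ shows that $(B,\vb)\in\dH$ lies in $\tmpa$ if and only if $\etaH(B,\vb)$ lies in the local double coset $\gamHp(A,\va)\gamHp$ at $p$ and in the full group $\gamHq$ at every $q\neq p$. The conditions at $q\neq p$ say exactly that $B\in GL_2(\Z_q)$ and $\vb\in\Z_q^2$; the vector constraint is automatic for $\vb\in\Z^2$, so away from $p$ the only content is that $\det B$ is, up to sign, a power of $p$. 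This reduces everything to the single local double coset $\gamHp(A,\va)\gamHp$, where the group law $(X,\mathbf{x})(Y,\mathbf{y})=(XY,X\mathbf{y}+|Y|\mathbf{x})$ gives that $(\gamma_1,\mathbf{u}_1)(A,\va)(\gamma_2,\mathbf{u}_2)$ has matrix part $\gamma_1 A\gamma_2$ and vector part $\gamma_1 A\mathbf{u}_2+|\gamma_2|\gamma_1\va+|\gamma_2||A|\mathbf{u}_1$.

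For assertion (1) I would invoke the elementary divisor theorem. The matrix-part condition gives $B\in GL_2(\Z_p)\,A\,GL_2(\Z_p)$, so the $p$-elementary divisors of $B$ are $p^{l}$ and $p^{l+k}$, while $B\in GL_2(\Z_q)$ for $q\neq p$ forces the elementary divisors to be prime to $q$. Hence the Smith normal form of $B$ over $\Z$ is exactly $\diag[p^{l},p^{l+k}]=A$, that is, $B\in\gamzt A\gamzt$.

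For assertions (2) and (3) the heart is the stabilizer computation. Restricting to matrix part $A$ amounts to imposing $\gamma_1 A\gamma_2=A$; writing $\gamma_2^{-1}=\pmat{a}{b}{c}{d}$ and computing $A\gamma_2^{-1}A^{-1}=\pmat{a}{p^{-k}b}{p^{k}c}{d}$, integrality forces $p^{k}\mid b$, so $\gamma_1=A\gamma_2^{-1}A^{-1}$ ranges over $\gzpk$ (over $\Z_p$ in case (2), and over $\gzpk\cap\gamzt$ when everything is taken over $\Z$ in case (3)). Substituting into the vector part, both $\gamma_1 A\mathbf{u}_2$ and $|\gamma_2||A|\mathbf{u}_1$ lie in $A\Z_p^2$ (resp.\ $A\Z^2$): indeed $\gamma_1 A=A\gamma_2^{-1}$ gives $\gamma_1 A\Z_p^2=A\Z_p^2$, and $|A|=p^{2l+k}$ already annihilates $\Z_p^2/A\Z_p^2$, while the $\mathbf{u}_2$ term alone sweeps out all of $A\Z_p^2$. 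Thus modulo $A\Z_p^2$ (resp.\ $A\Z^2$) one is left with $\vb\equiv|\gamma_2|\gamma_1\va$ (resp.\ $\vb\equiv|\gamma_2|\gamma_1\vc$), and the two equivalences follow; for (3) I would also use the standard fact that $\gamH(A,\vb)\gamH=\gamH(A,\vc)\gamH$ holds if and only if $(A,\vb)\in\gamH(A,\vc)\gamH$.

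The step I expect to be the main obstacle is reconciling the determinant factor $|\gamma_2|$ with the clean form $\vb\equiv X\va$ of the statement: one must check that the twisted family $\{|\gamma_2|\gamma_1\}=\{|\gamma_1|^{-1}\gamma_1\}$ still exhausts $\gzpk$ (resp.\ $\gzpk\cap\gamzt$). This holds because $\gzpk$ contains the scalar units, so rescaling $\gamma_1$ by a suitable unit (resp.\ by $\pm1$) makes $\gamma\mapsto|\gamma|^{-1}\gamma$ surjective, and then every $X\va$ is realized. The remaining verifications, namely $|A|\Z_p^2\subseteq A\Z_p^2$ and $\gamma_1 A\Z_p^2=A\Z_p^2$, are routine.
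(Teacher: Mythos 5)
Your proof is correct, and its skeleton matches the paper's: the same local--global splitting of $\hgamH \halpha \hgamH$, the elementary divisor theorem for assertion (1), and a stabilizer analysis of the matrix part $A$ for assertions (2) and (3). The difference is one of self-containedness. Where you compute the local double coset $\gamHp(A,\va)\gamHp$ explicitly from the semidirect-product group law --- correctly identifying $\gamma_1 = A\gamma_2^{-1}A^{-1} \in \gzpk$, absorbing the $\mathbf{u}_1,\mathbf{u}_2$ terms into $A\Z_p^2$, and handling the genuinely delicate point that the twisted family $|\gamma|^{-1}\gamma$ still exhausts $\gzpk$ (resp.\ that $\pm(\gzpk\cap\Gamma_{\Z^2}) = \gzpk\cap\Gamma_{\Z^2}$) --- the paper simply cites Proposition 3.3 of \cite{Hy} both for the constraint on matrix parts in assertion (1) and for assertions (2) and (3) outright, since that proposition packages exactly this local double coset structure. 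So your argument essentially reproves the cited local result: it buys independence from the earlier paper at the cost of length, while the paper keeps this section thin by leaning on the local theory it has already developed. For assertion (1) the two arguments are the same modulo phrasing: you read off the elementary divisors locally ($p^l, p^{l+k}$ at $p$, units at $q \neq p$) and conclude the Smith normal form over $\Z$ is $A$, whereas the paper takes the Smith form $\diag[x,y]$ of $B$ over $\Z$ first and then uses the local memberships to force $\diag[x,y] = A$.
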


\begin{proof}
For each prime number $q$, let us regard $\dHq$ as a subset of $\hdzt \times \hZ^2$. Then, we have 
\[\tmpa = \gamHp \halpha \gamHp \cap \left(\cap_{q\not = p} \gamHq\right).\]
By \cite[Proposition 3.3]{Hy}, we obtain \[B \in \Gamma_{\Z_p^2} A \Gamma_{\Z_p^2} \cap \left(\cap_{q \not =  p} \Gamma_{\Z_q^2}\right).\]
On the other hand, there exist positive integers $x$ and $y$ such that $x$ divides $y$ and $\diag[x,y]$ is contained in $\Gamma_{\Z^2}B\Gamma_{\Z^2}$. Since $\diag[x,y] \in \cap_{q\not = p} \gamHq$, it follows that $x$ and $y$ are powers of $p$. As $\diag[x,y] \in \Gamma_{\Z_p^2} A \Gamma_{\Z_p^2}$, we have $\diag[x,y] = A$, which completes the proof of the first assertion.

The second and the third assertions are a straightforward consequence of \cite[Proposition 3.3]{Hy}.
\end{proof}
\medskip
Note that $\Z^2/A\Z^2$ is isomorphic to $\Z_p^2/A\Z_p^2$. Since $\gzpk$  and $\gzpk \cap  \Gamma_{\Z^2}$ act on $\Z^2/A\Z^2$ in a natural way, the lemma above implies the following equality.

\begin{cor}\label{cor:cardinalityorbit} The following equality holds.
\[\left|\tmp\right| = \left| \gzpk \cap \Gamma_{\Z^2} \bs \ \gzpk (\va \mod A\Z^2)\right|,\]
where $\gzpk (\va \mod A\Z^2)$ is the $\gzpk$ orbit of $(\va \mod A\Z^2)$ in $\Z^2/A\Z^2$.
\end{cor}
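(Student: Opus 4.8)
The plan is to read off the cardinality of $\tmp$ directly from the three assertions of the preceding lemma: the first lets me normalize each double coset so that its first coordinate is exactly $A$, the second describes which vectors occur in the second coordinate, and the third detects when two normalized representatives give the same double coset.

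First I would note that $\tmpa$ is a union of $\gamH$-double cosets. Indeed, since $\etaH(\gamH)\subseteq\hgamH$ and $\hgamH\halpha\hgamH$ is $\hgamH$-bi-invariant, the preimage $\tmpa$ is stable under left and right multiplication by $\gamH$, so $\tmp$ is genuinely a set of double cosets. Next, given $(B,\vb)\in\tmpa$, the first assertion gives $B=\gamma_1 A\gamma_2$ with $\gamma_1,\gamma_2\in\gamzt$. Multiplying on the left by $(\gamma_1^{-1},\vzero)$ and on the right by $(\gamma_2^{-1},\vzero)$ — both elements of $\gamH$ — and using the product rule $(C,\vc)(B,\vb)=(CB,\,C\vb+|B|\vc)$, I would check that the first coordinate collapses to $A$ while the second becomes $\pm\gamma_1^{-1}\vb\in\Z^2$. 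Hence every class in $\tmp$ has a representative of the shape $(A,\vb)$ with $\vb\in\Z^2$.

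Then I would transport the remaining two assertions to the finite group $\Z^2/A\Z^2$. By the second assertion, the integral $\vb$ with $(A,\vb)\in\tmpa$ are exactly those satisfying $\vb\equiv X\va\pmod{A\Z_p^2}$ for some $X\in\gzpk$; the $(2,1)$-entry condition defining $\gzpk$ is precisely what makes $\gzpk$ preserve $A\Z_p^2$, so this action descends to $\Z_p^2/A\Z_p^2\cong\Z^2/A\Z^2$, and reducing modulo $A\Z^2$ identifies the admissible classes $(\vb\bmod A\Z^2)$ with the whole orbit $\gzpk(\va\bmod A\Z^2)$. By the third assertion, $(A,\vb)$ and $(A,\vc)$ represent the same $\gamH$-double coset exactly when $(\vb\bmod A\Z^2)$ and $(\vc\bmod A\Z^2)$ lie in a single $(\gzpk\cap\gamzt)$-orbit. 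Combining these, the map sending a double coset to the $(\gzpk\cap\gamzt)$-orbit of the second coordinate of any normalized representative is a well-defined bijection from $\tmp$ onto $\gzpk\cap\gamzt\bs\gzpk(\va\bmod A\Z^2)$, which is the claimed equality.

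The step I expect to require the most care is the normalization: I must verify that multiplying $(B,\vb)$ on both sides by $(\gamma_i^{-1},\vzero)$ keeps the vector part integral and stays within one $\gamH$-double coset, tracking the sign $|\gamma_2^{-1}|=\pm1$ and the determinant factors coming from the semidirect structure. The other delicate point is checking that the mod-$A\Z_p^2$ condition of the second assertion descends cleanly through $\Z^2/A\Z^2\cong\Z_p^2/A\Z_p^2$, so that the $\gzpk$- and $(\gzpk\cap\gamzt)$-orbit computations genuinely take place on the finite group $\Z^2/A\Z^2$.
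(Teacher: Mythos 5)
Your proposal is correct and follows the same route as the paper: the paper's own proof simply observes that $\Z^2/A\Z^2\cong\Z_p^2/A\Z_p^2$, that $\gzpk$ and $\gzpk\cap\Gamma_{\Z^2}$ act naturally on $\Z^2/A\Z^2$, and that the three assertions of the preceding lemma then yield the equality. Your write-up merely makes explicit the steps the paper leaves implicit — normalizing the first coordinate to $A$ via assertion 1, identifying admissible vectors with the $\gzpk$-orbit via assertion 2, and matching double cosets with $(\gzpk\cap\gamzt)$-orbits via assertion 3 — all of which check out.
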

\medskip

Next, we compute the right-hand side of the equality above.
Let us denote by $\gplpk$ the set \[\left\{ \Imat + \diag[p^l,p^{l+k}]M_2(\Z_p) \right\} \cap \Gamma_{\Z_p^2},\] then we see easily that $\gplpk$ coincides with the set of elements fixing every element of $\Z^2/A\Z^2$. Especially, it is a normal subgroup of $\gzpk$.
The quotient group $\gzpk / \gplpk$ is written by $\glk$. Note that it also acts on $\Z^2/A\Z^2$.
\newcommand{\detlk}{\det_{l,k}}
Let us put $U_0 = \Z_p^*$ and $U_i = 1+p^i\Z_p$ for each $i >0$. Since $\det(\gplpk) = U_l$, the determinant map induces the homomorphism $\detlk:\glk \to U_0/U_l$. Put $\glko = \ker \detlk$ and $\glkpm = \detlk^{-1}\{\pm 1\}$. We prove the following key proposition. 

\begin{prop}\label{prop:surjectivity}
The canonical homomorphisms $\gzpk \cap \gamzt \to \glkpm$ and $\gzpk \cap SL_2(\Z) \to \glko$ are surjective.
\end{prop}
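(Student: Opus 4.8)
The plan is to identify $\glk$ concretely as the group of automorphisms that $\gzpk$ induces on the finite module $M:=\Z_p^2/A\Z_p^2\cong \Z/p^l\oplus\Z/p^{l+k}$, to translate ``congruent modulo $\gplpk$'' into explicit entrywise congruences, and then to realize every class (with the correct determinant) by a genuinely \emph{integral} matrix via the reduction map for $SL_2(\Z)$.

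First I would make $\gplpk$-equivalence explicit. Since $\gplpk$ is exactly the kernel of the action of $\gzpk$ on $M$, comparing the images of the basis vectors shows that $X=\pmat{a}{b}{c}{d}$ and $X'=\pmat{a'}{b'}{c'}{d'}$ in $\gzpk$ define the same class in $\glk$ precisely when $a\equiv a'$, $b\equiv b'\pmod{p^l}$ and $c\equiv c'$, $d\equiv d'\pmod{p^{l+k}}$. In particular, two elements of $\gzpk$ represent the same class of $\glk$ as soon as they are congruent modulo $p^{l+k}$. Hence, given a class represented by $X\in\gzpk$, it suffices to produce an integral matrix $\gamma$ in the desired group with $\gamma\equiv X\pmod{p^{l+k}}$; the condition that $\gamma$ lie in $\gzpk$ (i.e.\ its $(2,1)$-entry be divisible by $p^k$) is then automatic, because $X\in\gzpk$.

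The key maneuver is to fix the determinant \emph{inside the coset} before lifting. Suppose the class lies in $\glko$, i.e.\ $\det X\in U_l$. Since $\det(\gplpk)=U_l$, I can choose $g\in\gplpk$ with $\det(Xg)=1$ and replace $X$ by $Xg$ without changing its class, so that $X\in SL_2(\Z_p)\cap\gzpk$. Reducing modulo $p^{l+k}$ gives an element of $SL_2(\Z/p^{l+k}\Z)$, and by the classical surjectivity of the reduction map $SL_2(\Z)\to SL_2(\Z/p^{l+k}\Z)$ I lift it to $\gamma\in SL_2(\Z)$ with $\gamma\equiv X\pmod{p^{l+k}}$. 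By the previous paragraph $\gamma\in\gzpk\cap SL_2(\Z)$ and $\gamma$ represents the given class, which proves surjectivity onto $\glko$.

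For the map into $\glkpm$, a class either lies in $\glko$, already handled, or satisfies $\det X\equiv-1\pmod{p^l}$. In the latter case I take the reflection $w=\diag[-1,1]\in\gamzt\cap\gzpk$, which has determinant $-1$; then $w^{-1}X$ represents a class of $\glko$, so the $SL$ case yields $\gamma_0\in SL_2(\Z)\cap\gzpk$ in that class, and $\gamma:=w\gamma_0\in\gamzt\cap\gzpk$ has determinant $-1$ and, using the group structure of $\glk$, represents the original class. The main point to watch is exactly this control of the \emph{integral} determinant: the reduction map for $SL_2(\Z)$ only supplies matrices of determinant $1$, and it is the determinant-adjustment step (relying on $\det(\gplpk)=U_l$) together with the reflection $w$ that lets one hit the classes of determinant $-1$ by integral matrices. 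The mixed moduli $p^l$ and $p^{l+k}$ cause no trouble, since lifting modulo the larger power $p^{l+k}$ subsumes both sets of congruences.
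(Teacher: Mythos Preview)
Your argument is correct and follows the same overall strategy as the paper: reduce to an $SL_2$-lifting problem modulo a power of $p$, and then reach the determinant-$(-1)$ classes by multiplying by a reflection in $\gamzt\cap\gzpk$. The implementation differs slightly. The paper first takes representatives of $\glk$ inside $M_2(\Z)$ and then proves a tailored lemma (together with a preparatory coprimality lemma) that lifts any integral matrix with $c\neq 0$, $(d,n)=1$, and determinant $\equiv 1\pmod n$ to an element of $SL_2(\Z)$, keeping the entry $c$ fixed and changing $a,b,d$ only modulo $n$. You instead use the fact $\det(\gplpk)=U_l$, stated in the paper, to adjust the representative so that $\det X=1$ exactly, and then invoke the classical surjectivity of $SL_2(\Z)\to SL_2(\Z/p^{l+k}\Z)$, noting that congruence modulo $p^{l+k}$ already forces membership in $\gzpk$ and subsumes both the $p^l$ and $p^{l+k}$ entrywise conditions. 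Your route is a bit more economical, as it avoids the two preparatory lemmas; the paper's route has the minor advantage of being self-contained and of preserving the lower-left entry exactly rather than only modulo $p^{l+k}$.
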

\medskip

To prove this, we prepare some lemmas.

\begin{lemma}
Let $c$, $d$ and $n$ be integers with $c\not = 0$ and $(d,n)=1$. Then, there exists an integer $x$ such that $(c, d+nx)=1$. 
\end{lemma}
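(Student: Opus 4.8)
The plan is to analyze the condition $(c, d+nx)=1$ prime by prime. Since $c \neq 0$, it has only finitely many prime divisors, and the requirement $(c, d+nx)=1$ is equivalent to asking that no prime $p$ dividing $c$ also divides $d+nx$. I would therefore partition the prime divisors of $c$ into two classes, according to whether they divide $n$, and treat each class separately before recombining.

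For a prime $p \mid c$ with $p \mid n$, the hypothesis $(d,n)=1$ forces $p \nmid d$; moreover $d + nx \equiv d \pmod p$ for \emph{every} $x$, so such a prime never divides $d+nx$, whatever $x$ is chosen. This class imposes no constraint, and it is here that the coprimality assumption $(d,n)=1$ is used.

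The remaining primes $p \mid c$ satisfy $p \nmid n$. Let $c'$ be the product of these distinct primes, so that $(n,c')=1$ and $n$ is invertible modulo $c'$. I would then choose $x$ to be any solution of the congruence $nx \equiv 1 - d \pmod{c'}$, which is solvable precisely because $n$ is a unit modulo $c'$. For this $x$ we get $d + nx \equiv 1 \pmod{c'}$, hence $(d+nx, c')=1$, so no prime dividing $c'$ divides $d+nx$. (Equivalently, one may invoke the Chinese Remainder Theorem to avoid, for each such $p$, the single forbidden residue class of $x$ modulo $p$, which is possible since each $p \geq 2$ leaves an allowed class.)

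Combining the two classes, no prime divisor of $c$ divides $d+nx$ for this choice of $x$, which yields $(c, d+nx)=1$ and proves the lemma. The argument is entirely elementary; there is no serious obstacle, and the only point requiring care is the split of the prime divisors of $c$ into those dividing $n$ and those not. Indeed, the primes dividing both $c$ and $n$ cannot be controlled by adjusting $x$ at all, and must be disposed of by the hypothesis $(d,n)=1$, while the primes dividing $c$ but not $n$ are exactly the ones the congruence for $x$ is designed to handle.
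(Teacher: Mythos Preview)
Your proof is correct. Both you and the paper argue prime by prime over the divisors of $c$, but the splitting criterion differs: you partition according to whether $p \mid n$, whereas the paper partitions according to whether $p \mid d$. The paper's construction is more explicit: taking $S$ to be the set of primes dividing $c$ but not $d$, it simply sets $x = \prod_{p \in S} p$. For $p \in S$ one has $p \mid x$ so $d + nx \equiv d \not\equiv 0 \pmod p$; for $p \mid c$ with $p \mid d$, one has $p \nmid n$ (by $(d,n)=1$) and $p \nmid x$ (since $p \notin S$), whence $p \nmid nx$ and $p \nmid d + nx$. Your approach instead solves a linear congruence modulo the product $c'$ of primes dividing $c$ but not $n$, which is slightly less explicit but makes the role of the hypothesis $(d,n)=1$ more transparent. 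Both arguments are equally elementary and of comparable length.
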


\begin{proof}
Since $c \not = 0$, the set $S$ of prime numbers which divide $c$ and not $d$ is a finite set.
Let $x$ be the product of all elements of $S$, then $x$ has our desired property. 
\end{proof}

The following lemma slightly refines the well known fact that $SL_2(\Z) \to SL_2(\Z/n\Z)$ is surjective for each positive integer $n$.

\begin{lemma}
Let $n$ be a positive integer, and let $\pmat{a}{b}{c}{d}$ be an element of $M_2(\Z)$ whose determinant is congruent to $1$  
modulo $n$. If $c \not = 0$ and $(d,n)=1$, then there exist integers $a'$, $b'$ and $d'$ satisfying that they are respectively congruent to $a$, $b$ and $d$ modulo $n$, and that $\pmat{a'}{b'}{c}{d'}$ is an element of $SL_2(\Z)$. 
\end{lemma}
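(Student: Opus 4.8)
The plan is to modify the three entries $a,b,d$ in two stages, keeping the entry $c$ fixed throughout: first replace $d$ by a congruent integer coprime to $c$, and then solve the determinant-one equation $a'd'-b'c=1$ while arranging the remaining congruences modulo $n$.

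First I would invoke the preceding lemma. Since $c\neq 0$ and $(d,n)=1$, there is an integer $x$ with $(c,\,d+nx)=1$; set $d'=d+nx$, so that $d'\equiv d\pmod n$ and $(c,d')=1$. Because $(c,d')=1$, there exist integers $p,q$ with $pd'-qc=1$. Using $c\neq 0$ and $(c,d')=1$, subtracting the equations for any two solutions of $a'd'-b'c=1$ shows that the complete set of integer solutions $(a',b')$ is $\{(p+tc,\ q+td') : t\in\Z\}$. It therefore remains to choose the single parameter $t$ so that simultaneously $a'=p+tc\equiv a$ and $b'=q+td'\equiv b$ modulo $n$.

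The crux is that, although there is only one free parameter $t$ to satisfy two congruences, the determinant hypothesis makes them compatible. Since $d'\equiv d\pmod n$ and $(d,n)=1$, the residue of $d'$ is a unit modulo $n$, so the congruence $q+td'\equiv b\pmod n$ has a solution $t$, unique modulo $n$. I would then check that this same $t$ automatically forces $p+tc\equiv a\pmod n$. To see this, reduce the exact identity $pd'-qc=1$ modulo $n$ and compare it with $ad'-bc\equiv ad-bc\equiv 1\pmod n$; subtracting gives $(p-a)d'\equiv(q-b)c\pmod n$. Multiplying the chosen congruence $td'\equiv b-q\pmod n$ by $c$ and substituting this relation yields $tcd'\equiv(a-p)d'\pmod n$, and cancelling the unit $d'$ gives $tc\equiv a-p\pmod n$, as needed. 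For such a $t$, the matrix $\pmat{a'}{b'}{c}{d'}$ lies in $SL_2(\Z)$ with $a'\equiv a$, $b'\equiv b$, and $d'\equiv d$ modulo $n$.

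I expect the main obstacle to be exactly this compatibility verification: one must recognize that the congruence $ad-bc\equiv 1\pmod n$ is precisely what guarantees that the two residue conditions on $a'$ and $b'$ can be met by a common $t$, so that no further adjustment of $d'$ (or of $c$) is required. The two preliminary moves---coprimality of $c$ and $d'$, and unit-ness of $d'$ modulo $n$---are what make this single-parameter argument go through.
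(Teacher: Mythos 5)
Your proof is correct and takes essentially the same approach as the paper: both invoke the preceding lemma to replace $d$ by $d'=d+nx$ coprime to $c$, then combine a B\'ezout relation for $(c,d')$ with the determinant congruence $ad-bc\equiv 1 \pmod n$ to adjust $a$ and $b$ within their residue classes. The only difference is the direction of the final adjustment---the paper perturbs $(a,b)$ by multiples of $n$ to make the determinant exactly $1$ (so the congruences hold by construction), whereas you start from an exact solution of $a'd'-b'c=1$ and slide along the solution line to meet the congruences, verifying compatibility; these are dual forms of the same computation.
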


\begin{proof}
By the previous lemma, we choose $x \in \Z$ such that $(c, nx+d) = 1$, and then choose $y,\ z\in \Z$ satisfying $cy + (nx+d)z = 1$. By assumption, there exists an integer $r$ such that \[\det\pmat{a}{b}{c}{nx+d} = 1-  nr.\] Thus $a'= a+nrz$, $b' = b - nry$ and $d' = d + nx$ have the desired property.
\end{proof}
\medskip

\newcommand{\sgam}{\widetilde{S}_{\va}}
\newcommand{\sglk}{
S_\va
}
Now we give a proof of Proposition \ref{prop:surjectivity}.
\begin{proof}[Proof of Proposition \ref{prop:surjectivity}]
For any two elements $X$ and $Y$ of $\gzpk$, $X^{-1}Y$ belongs to $\gplpk$ if and only if $X - Y$ is an element of $\diag[p^l,p^{l+k}]M_2(\Z_p)$.
Hence, we may take a complete system of representatives of $\glk$ contained in $M_2(\Z)$.
Thus, the surjectivity of $\gzpk \cap SL_2(\Z) \to \glko$ is an easy consequence of the lemma above.
Since the matrix $\diag[1,-1]$ is contained in $\gzpk\cap \gamzt$ and its determinant equals $-1$, the natural map $\gzpk \cap \gamzt \to \glkpm$ is surjective as well,
which completes the proof. 
\end{proof}
\medskip

By Corollary \ref{cor:cardinalityorbit} and Proposition \ref{prop:surjectivity}, we therefore have
\[\left|\tmp\right| = \left|\glkpm \bs \glk(\va \mod A\Z^2)\right|.\]

Let us denote by $\sglk$ the stabilizer subgroup of $(\va \mod A\Z^2)$ in $\glk$. Since $\detlk$ induces the isomorphism $\glk/\glko \to U_0/U_l$, the right-hand side of the equality above equals $\left| U_0/ \pm {\detlk} (\sglk) \right|$. Hence we are reduced to calculating $\detlk(\sglk)$.

\begin{prop}\label{prop:detsa}
Put $n = min(i, k-i, l-j)$. Then we have $\detlk(\sglk) = U_n/U_l$.
\end{prop}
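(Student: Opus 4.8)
The plan is to compute $\detlk(\sglk)$ directly, since the proposition is exactly this computation. I would work with an honest representative $X = \pmat{a}{b}{c}{d} \in \gzpk$ of a class in $\sglk$, so $X \in GL_2(\Z_p)$ with $p^k \mid c$. Writing out $X\va \equiv \va \pmod{A\Z_p^2}$ with $A = \diag[p^l,p^{l+k}]$ and cancelling the common factor $p^j$ from both coordinates, the stabilizer condition becomes the pair of congruences
\[
a + b p^i \equiv 1 \pmod{p^{l-j}}, \qquad c + d p^i \equiv p^i \pmod{p^{l+k-j}}.
\]
The first thing I would extract is the consequence that, because $p^k \mid c$ while $l+k-j \ge k$, the second congruence forces $d \equiv 1 \pmod{p^{k-i}}$; this is the mechanism that injects the quantity $k-i$ into the answer.

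For the inclusion $\detlk(\sglk) \subseteq U_n/U_l$ I would solve the two congruences for $a$ and $c$, substitute into $\det X = ad - bc$, and simplify to
\[
\det X - 1 = (d-1) - b p^i + p^{l-j} s d - b p^{l+k-j} t
\]
for suitable $s,t \in \Z_p$. The four summands have $p$-adic valuation at least $k-i$, $i$, $l-j$, and $l+k-j$ respectively, hence at least $n = \min(i,k-i,l-j)$, so $\det X \in U_n$. As $\detlk$ is a homomorphism and $\sglk$ a group, its image is automatically a subgroup of $U_n/U_l$.

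For the reverse inclusion I would exhibit three explicit one-parameter subfamilies of $\sglk$. The upper-triangular matrices $\pmat{1 - b p^i}{b}{0}{1}$ have determinant $1 - bp^i$ and sweep out $U_i$; the lower-triangular matrices $\pmat{1}{0}{p^i(1-d)}{d}$ with $d \in U_{k-i}$ have determinant $d$ and sweep out $U_{k-i}$ (here the constraint $p^k \mid c$ uses precisely $d \equiv 1 \bmod p^{k-i}$); and the diagonal matrices $\pmat{a}{0}{0}{1}$ with $a \in U_{l-j}$ sweep out $U_{l-j}$. Thus the image, being a subgroup, contains $U_i/U_l$, $U_{k-i}/U_l$ and $U_{l-j}/U_l$; these three are nested and the largest is $U_n/U_l$, so the image contains, hence equals, $U_n/U_l$.

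I expect the congruence unwinding and the valuation bookkeeping to be entirely routine. The step I would treat most carefully is the coupling between $p^k \mid c$ and the second congruence, which is what pins $d$ into $U_{k-i}$ and produces the $k-i$ term in the minimum. On the surjectivity side, the subtle point is to make sure each chosen family is simultaneously stabilizing, integral with $p^k \mid c$, and invertible over $\Z_p$ (the invertibility is automatic once $i \ge 1$, but needs a word when $i=0$, i.e.\ when $n=0$); an exponent or sign slip there would silently change which $U_m$ gets covered, so I would check each family against every defining constraint of $\gzpk$ before concluding.
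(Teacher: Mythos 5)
Your proof is correct and takes essentially the same route as the paper: you characterize the stabilizer by the same pair of congruences (the paper solves them for $a$ and $d$, you for $a$ and $c$) and then compute the image of the determinant map. The valuation bound for one inclusion and the three explicit one-parameter families for the other simply spell out what the paper compresses into ``by a direct computation,'' so this is the paper's argument with the details filled in.
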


\begin{proof}
Let $\pmat{a}{b}{p^kc}{d}$ be an element of $\gzpk$. Then it belongs to the stabilizer subgroup $\sgam$ of $(\va \mod A\Z^2)$ in $\gzpk$ if and only if there exist elements $x$, $y$ of $\Z_p$ such that $a =1 - bp^i + p^{l-j}x$ and $d = 1 - p^{k-i}c + p^{l-j+k-i}y$. By a direct computation, we complete the proof.
\end{proof}
\medskip

Our aim of this section is settled by the following corollary.

\begin{cor} \label{cor:explicitcaluculation} The following equality holds.
\[\left| \tmp \right|= [U_0:\pm U_n].\] 
Especially, we have
\eqna{
 \left| \tmp \right| =
 \left\{
 \begin{matrix}
  p(p-1)/2 & \text{if $p\not = 2$, $l=2$,$k=4$, $j=0$, $i=2$},\\
  2 & \text{if $p = 2$, $l=3$,$k=6$, $j=0$, $i=3$}.
 \end{matrix}
 \right.
 }
\end{cor}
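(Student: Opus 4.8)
The plan is to deduce the corollary directly from the formula $\left|\tmp\right| = \left| U_0/\pm\detlk(\sglk)\right|$ established just before Proposition \ref{prop:detsa}, combined with Proposition \ref{prop:detsa} itself. First I would substitute $\detlk(\sglk) = U_n/U_l$ into that formula. Since $U_l \subseteq U_n$, the subgroup $\pm\detlk(\sglk)$ of $U_0/U_l$ is exactly the image $\pm U_n/U_l$ of $\pm U_n$ under the projection $U_0 \twoheadrightarrow U_0/U_l$. The inclusions $U_l \subseteq U_n \subseteq \pm U_n \subseteq U_0$ then permit the third isomorphism theorem, giving $\left(U_0/U_l\right)\big/\left(\pm U_n/U_l\right)\cong U_0/\pm U_n$; taking cardinalities yields the first assertion $\left|\tmp\right| = [U_0:\pm U_n]$.

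For the two numerical entries I would first compute $n = \min(i,k-i,l-j)$. In the case $p\not=2$, $l=2$, $k=4$, $j=0$, $i=2$ this is $\min(2,2,2)=2$, and in the case $p=2$, $l=3$, $k=6$, $j=0$, $i=3$ it is $\min(3,3,3)=3$. Next I would evaluate $[U_0:U_n]$ from the standard filtration of the unit group, namely $[U_0:U_1]=p-1$ and $[U_m:U_{m+1}]=p$ for $m\geq 1$, so that $[U_0:U_n]=(p-1)p^{n-1}$ for $n\geq 1$.

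The one point needing care, and the only place where $p=2$ differs from the odd case, is the sign factor $[\pm U_n:U_n]$, which equals $2$ precisely when $-1\notin U_n$ and equals $1$ otherwise. Since $-1\in U_n = 1+p^n\Z_p$ is equivalent to $v_p(2)\geq n$, one has $-1\notin U_n$ whenever $p$ is odd and $n\geq 1$, and whenever $p=2$ and $n\geq 2$; in both of our cases $[\pm U_n:U_n]=2$. Dividing, $[U_0:\pm U_n]=(p-1)p^{n-1}/2$, which gives $p(p-1)/2$ when $p$ is odd and $n=2$, and $2^2/2=2$ when $p=2$ and $n=3$, matching the two rows of the table. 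As the whole argument rests on Proposition \ref{prop:detsa} and the elementary structure of the unit filtration, I do not expect a genuine obstacle beyond correctly handling this sign factor at $p=2$.
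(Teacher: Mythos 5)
Your proposal is correct and is essentially the paper's own argument: the paper also deduces the corollary immediately from Proposition \ref{prop:detsa} together with the identity $\left|\tmp\right| = \left| U_0/ \pm \detlk(\sglk)\right|$, and your explicit handling of the unit filtration and of the sign factor $[\pm U_n : U_n]$ (including the check that $-1 \notin U_n$ in both numerical cases) simply spells out what the paper leaves as ``immediate.'' No gaps; the numerical evaluations $(p-1)p/2$ for $p \neq 2$, $n=2$ and $4/2=2$ for $p=2$, $n=3$ are exactly right.
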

\begin{proof}
The proof is an immediate consequence of Proposition \ref{prop:detsa}.
\end{proof}
\medskip

In the conclusion of this paper, we obtain the following theorem.

\begin{thm}\label{thm:main}
The following assertions hold.
\begin{enumerate}
\item For each $p$, the local Hecke ring $\rHp$ is embedded into $\rH$ by $\etauH$.
\item The local Hecke rings $\rHp$ commute with each other in $\rH$.
\item $\rH$ is not generated by the local Hecke rings $\rHp$ as a ring.
\end{enumerate}
\end{thm}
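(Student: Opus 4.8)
The plan is to prove the three assertions of Theorem \ref{thm:main} by assembling results already established earlier in the paper, treating the third assertion as the genuinely new content. For assertions 1 and 2, I would simply invoke Theorem \ref{thm:assertion:onetwo}, which has already established that each $\rHp$ embeds into $\rH$ via $\etauH$ and that these local Hecke rings commute with one another in $\rH$. These follow from Proposition \ref{prop:multiplicativityetauH} (the injectivity and ring-homomorphism property of $\etauH$) together with Proposition \ref{prop:structureofhrl} applied to the profinite Hecke ring $\hrH$; no further work is needed beyond citing them.

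The substance lies in assertion 3, the \emph{non}generation. Here the key observation is that the image $\etauH(\hrH)$ is precisely the subring of $\rH$ generated by the local Hecke rings $\rHp$: indeed, by Proposition \ref{prop:structureofhrl} the ring $\hrH$ is generated by the $\rHp$, and $\etauH$ is a ring homomorphism carrying each $\rHp$ into $\rH$, so the subring generated by the images of the $\rHp$ is exactly $\etauH(\hrH)$. Therefore asserting that $\rH$ is not generated by the $\rHp$ is equivalent to asserting that $\etauH$ is not surjective. Thus I would reduce assertion 3 to the nonsurjectivity of $\etauH$.

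To establish nonsurjectivity, I would recall the characterization noted at the start of Section \ref{section:nonsurjectivity}: it suffices to exhibit a single $\halpha \in \hdH$ for which the double coset set $\gamH \bs \etaH^{-1}(\hgamH \halpha \hgamH)/\gamH$ has more than one element, since then the basis element $\hgamH \halpha \hgamH$ of $\hrH$ cannot have a single-double-coset image, so $\etauH(\hgamH\halpha\hgamH)$ is a sum of at least two distinct double cosets and no $\Z$-linear combination of the generating images reproduces an individual such double coset outside the span. Concretely, Corollary \ref{cor:explicitcaluculation} already supplies explicit parameters $(p,l,k,j,i)$ for which $\left|\gamH \bs \etaH^{-1}(\hgamH \halpha \hgamH)/\gamH\right|$ exceeds $1$ (for example $p\neq 2$ with $l=2,k=4,j=0,i=2$, giving $p(p-1)/2 \geq 1$, strictly greater than $1$ once $p\geq 3$; and $p=2$ with $l=3,k=6,j=0,i=3$, giving $2$). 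I would therefore cite Corollary \ref{cor:explicitcaluculation} to produce the required $\halpha$ and conclude nonsurjectivity.

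The only point demanding care, and the step I expect to be the main obstacle, is the logical passage from ``$\etauH$ is not surjective'' to ``$\rH$ is not generated by the $\rHp$ as a ring.'' One must argue cleanly that the ring generated by the $\rHp$ inside $\rH$ coincides with $\etauH(\hrH)$ rather than merely being contained in it; this uses that $\etauH$ is a ring homomorphism (Proposition \ref{prop:multiplicativityetauH}) together with the generation statement for $\hrH$ (Proposition \ref{prop:structureofhrl}, assertion 3), so that products and $\Z$-linear combinations of elements of the $\rHp$ stay inside the image. Once that identification is in hand, the proof is immediate, and I would phrase the argument so that assertions 1 and 2 are dispatched by reference to Theorem \ref{thm:assertion:onetwo} while assertion 3 rests on the equivalence above and Corollary \ref{cor:explicitcaluculation}.
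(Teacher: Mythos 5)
Your proposal is correct and takes essentially the same route as the paper: assertions 1 and 2 are dispatched by citing Theorem \ref{thm:assertion:onetwo}, and assertion 3 is reduced to the nonsurjectivity of $\etauH$ via Proposition \ref{prop:structureofhrl} and the ring-homomorphism property of $\etauH$, with the nonsurjectivity itself supplied by Corollary \ref{cor:explicitcaluculation}. The only difference is presentational: you make explicit the identification of $\etauH(\hrH)$ with the subring of $\rH$ generated by the $\rHp$, a step the paper's one-line proof leaves implicit.
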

\begin{proof}
Assertions $1$ and $2$ have been proved in Theorem \ref{thm:assertion:onetwo}.
Proposition \ref{prop:structureofhrl} and the nonsurjectivity of $\etauH$ imply assertion 3.
\end{proof}

\section*{Acknowledgement}

The author would like to thank Professor Hiroaki Nakamura for his continuous support, many suggestions and warm encouragement over the years. The author also appreciates the unknown referee's valuable and profound comments.

\textsc{Department of Health Informatics, Faculty of Health and Welfare Services Administration, Kawasaki University of Medical Welfare, Kurashiki, 701-0193, Japan}

{\it Email address}: \texttt{fumitake.hyodo@mw.kawasaki-m.ac.jp}

\end{document}